\documentclass[11pt]{article}
\usepackage[a4paper,margin=30mm]{geometry}
\usepackage{amsmath,amssymb,amsthm,mathtools,bm}
\usepackage{enumitem}
\usepackage{microtype}
\usepackage{hyperref}
\hypersetup{hidelinks}
\usepackage{booktabs}
\usepackage{graphicx}

\numberwithin{equation}{section}
\newtheorem{theorem}{Theorem}[section]
\newtheorem{proposition}[theorem]{Proposition}
\newtheorem{lemma}[theorem]{Lemma}
\newtheorem{corollary}[theorem]{Corollary}
\newtheorem{remark}[theorem]{Remark}
\newtheorem{assumption}[theorem]{Assumption}
\newcommand{\E}{\mathbb E}
\newcommand{\R}{\mathbb R}

\newcommand{\norm}[1]{\lVert #1\rVert}

\title{\bfseries Third-moment cancellation and predictable representation
in finite-state BSDE approximation}
\author{Akihiro Tanaka\thanks{Sumitomo Mitsui Banking Corporation, 1-1-2 Marunouchi, Chiyoda-ku, Tokyo 100-0005, Japan. Email: \texttt{tnkaki2000@gmail.com}.}}
\date{}

\begin{document}
\maketitle

\begin{abstract}
We study finite-state approximations of Brownian-driven Markovian backward
stochastic differential equations when the same one-step increment is used
both for weak propagation and for martingale representation.  For a
centered isotropic state-independent increment with vanishing third moment,
we prove first-order convergence of the Markov value field, its spatial
gradient, and the predictable \(Z\)-field under suitable smoothness assumptions.  The
corresponding orthogonal martingale has maximal \(L^2\) size
\(O(h^{1/2})\).  We identify its quadratic representation defect and prove
a joint functional limit theorem for the state process and the normalized
orthogonal martingale, which in particular yields a mixed-normal terminal limit
with path-dependent conditional variance.  On the trace-free quadratic sector,
the representation defect has the sharp universal operator lower bound
$d/[2(d+2)]$; equality holds exactly for normalized laws that are weighted
spherical $4$-designs.
In dimension \(d\ge2\), vanishing third moment is
incompatible with exact scalar one-step predictable representation for any
finite-state centered isotropic increment.  Consequently, every such
exact-representation law has a nonzero cubic direction, and a smooth
Fourier terminal condition converts this direction into a sharp global
\(h^{1/2}\) weak error.  For the centrally symmetric regular-simplex rule we
also identify the leading quadratic GKW component, compute the exact
first-order asymptotic of the accumulated orthogonal-martingale variance,
and recover it from the general functional limit.
Thus the same third-order tensor links weak accuracy and predictable
representation, yielding an accuracy--representation tradeoff for
finite-state BSDE approximations.
\end{abstract}

\medskip
\noindent\textbf{Keywords.}
backward stochastic differential equation; random-walk approximation;
predictable representation; orthogonal martingale; weak approximation;
finite-state martingale.

\medskip
\noindent\textbf{2020 Mathematics Subject Classification.}
Primary 60H10, 60H35; Secondary 60F17, 60G42, 65C30.

\section{Introduction}

Backward stochastic differential equations can be approximated by backward
equations driven by discrete-time martingales or random walks.  When the
approximating martingale does not have the predictable representation
property, an additional martingale orthogonal to the driving increments
appears naturally; this phenomenon is already present in the approximation
framework of Briand, Delyon and M\'emin~\cite{BDM2002}.  Convergence of
random-walk-driven BS\(\Delta\)Es has subsequently been studied in various
forms, including the general weak-convergence and stability results of
Madan, Pistorius and Stadje~\cite{MadanPistoriusStadje2016}.  Quantitative
rates for random-walk approximations have also been obtained in several
settings, including the early simple-random-walk scheme of
Ma, Protter, San Mart\'in and Torres~\cite{MaProtterSanMartinTorres2002},
the Wasserstein-rate analysis of Briand, Geiss, Geiss and
Labart~\cite{BriandGeissGeissLabart2021}, and the \(L^2\)-rate results of
Geiss, Labart and Luoto~\cite{GeissLabartLuoto2020}.  The first-order result
below addresses a different question: in the smooth Markovian setting, it
identifies a moment condition on a general finite-state increment that yields
first-order value, gradient and predictable-field accuracy, and then relates
that condition to one-step predictable representation.  The preceding literature
highlights two questions that become coupled for a finite-state Markovian
scheme: how accurately the walk reproduces Brownian weak propagation, and
how much of the one-step martingale space is represented by stochastic
integration against the walk itself.  The central point of this paper is
that, for finite-state schemes using the same increment for both tasks,
these two requirements are not independent: the third-order tensor that
controls weak accuracy also constrains exact one-step predictable
representation.

This paper studies that coupling.  Let \(\xi\in\mathbb R^d\) be a
state-independent finite-state increment satisfying
\[
 \E[\xi]=0,\qquad
 \E[\xi\xi^\top]=I_d.
\]
The first two moments give the Brownian drift and covariance.  The next
moment controls the first non-Gaussian term in a smooth weak expansion.  If
\[
 \E[\xi^{\otimes3}]=0,
\]
then the order-\(h^{3/2}\) term disappears simultaneously from the weak
propagator and from the discrete gradient.  We show that, under smooth
Markovian assumptions, this local cancellation propagates through the
nonlinear implicit BS\(\Delta\)E and yields order-\(h\) convergence of the
value field, its spatial gradient, and the predictable \(Z\)-field.

The same condition has a representation-theoretic consequence.  Exact
scalar one-step predictable representation means that every centered
one-step scalar random variable is a linear function of \(\xi\).  In the
minimal finite-state case this is the discrete normal-martingale geometry
described by the structure equations of Attal and
\'Emery~\cite{AttalEmery1994} and by the theory of obtuse random
walks~\cite{FranzHamdi2015,AttalDeschampsPellegrini2016}.  The corresponding
structure coefficients are third-order moments.  As an elementary consequence
of this known structure-equation framework, we isolate the implication needed
for the approximation problem: in dimension
\(d\ge2\), exact scalar one-step predictable representation and complete
third-moment cancellation cannot coexist under centered isotropy.

Combining the two sides gives the main structural conclusion.  Every
finite-state centered isotropic exact-PRP increment in dimension \(d\ge2\)
has a direction \(a\) with
\[
 \E[(a^\top\xi)^3]\ne0.
\]
For the heat equation with a suitable smooth Fourier terminal condition,
the corresponding one-step multiplier has an order-\(h^{3/2}\) imaginary
defect.  Over \(T/h\) steps this produces a genuine order-\(h^{1/2}\) global
weak error.  Thus exact one-step representation rules out a uniform
first-order weak error bound for this class of single-increment schemes.  If
exact representation is relaxed, third-moment cancellation becomes
possible; the unrepresented part then appears as an orthogonal martingale,
whose accumulated scale is \(O(h^{1/2})\).  This orthogonal martingale
is the part of the one-step continuation-value innovation that cannot be
represented by stochastic integration against the driving walk.  Its
accumulated scale therefore quantifies a residual exposure outside the
walk-generated representation sector, not merely an auxiliary term in the
BS\(\Delta\)E decomposition.

A centrally symmetric regular simplex makes this tradeoff explicit.  The
minimal regular simplex has \(d+1\) states and exact scalar one-step PRP but,
for \(d\ge2\), a nonzero third moment.  Adding the opposite vertices gives a
\(2(d+1)\)-state isotropic law with all odd moments equal to zero.  The
symmetric rule is therefore first-order in the smooth BS\(\Delta\)E
problem, but its larger one-step \(L^2\) space contains a nontrivial GKW
complement.  For a Fourier benchmark we identify the leading quadratic
component in that complement and determine the exact limit of
\(h^{-1}\E[|N_N^h|^2]\), showing that it is strictly positive.
More generally, we identify the quadratic GKW representation defect that
governs this residual fluctuation.  Under third-moment cancellation we prove
a dimension-dependent universal lower bound for its total size on the
trace-free quadratic sector.  We then prove a joint functional limit theorem
for the state process and $N^h/\sqrt h$.  The limiting orthogonal component
is a stochastic integral driven by an independent Brownian innovation, with
path-dependent quadratic variation determined by the Hessian of the limiting
value field.

\subsection*{Contributions and relation to prior work}

The individual ingredients of the argument are classical in isolation.
Third-moment matching is a standard weak-approximation principle; see, for
example, Kloeden and Platen~\cite{KloedenPlaten1992} and Lyons and
Victoir~\cite{LyonsVictoir2004}.  Minimal finite-state
normal martingales and their third-order structure tensors belong to the
discrete martingale literature cited above.  Orthogonal martingales in BSDE
approximations without predictable representation are also classical.  The
contribution here is to identify the \emph{same} third-order tensor as the
quantity governing both mechanisms and to turn that observation into sharp
positive and negative approximation results.  The cubature-based BSDE schemes of Crisan and
Manolarakis~\cite{CrisanManolarakis2012,CrisanManolarakis2014} also include a
second-order construction; Runge--Kutta BSDE schemes provide
another route to higher-order discretization~\cite{ChassagneuxCrisan2014}.
Certain degree-three Brownian cubature rules may also be represented by
finite-support one-step increments satisfying the same low-order moment
conditions.  Thus neither finite support nor first-order accuracy alone is
claimed as novel here.  Our focus is the simultaneous analysis of numerical
accuracy and exact scalar one-step predictable representation for the
\emph{same} increment law; the obstruction does not apply to higher-order
BSDE algorithms in general.  Recombining
multinomial trees for multiple assets have also been studied by Sierag and
Hanzon~\cite{SieragHanzon2018}; the issue here is whether the third-moment condition and exact one-step scalar
PRP can hold simultaneously for a centered isotropic increment.  Classical Brownian time-discretization error expansions for \(Y,Z\) are
given by Gobet--Labart~\cite{GobetLabart2007}, and finite-state
BS\(\Delta\)Es have an independent discrete-time theory
\cite{CohenElliott2010}.  The moment conditions here are also degree-three
Gaussian cubature conditions; see Cools~\cite{Cools1997}.  Our point is the
specific coupling of those conditions with exact scalar one-step PRP.

More precisely, the paper proves the following.

\begin{enumerate}[leftmargin=2.2em]
\item If a state-independent finite-state increment is centered, isotropic,
and third-moment-free, then the smooth Markovian
BS\(\Delta\)E has first-order value, gradient, and predictable-\(Z\)
accuracy.  The orthogonal martingale satisfies
\[
 \E[\max_{n\le N}|N_n^h|^2]=O(h).
\]
Moreover, the quadratic GKW representation defect obeys the sharp universal
operator lower bound $d/[2(d+2)]$ on the trace-free symmetric sector, with
equality exactly for normalized laws that are weighted spherical $4$-designs.
After normalization by
\(h^{-1/2}\), the orthogonal martingale admits a joint functional limit
with the state process; the limiting orthogonal noise is conditionally
Gaussian, with quadratic variation determined by that defect.

\item For \(d\ge2\), exact scalar one-step PRP is incompatible with
\(\E[\xi^{\otimes3}]=0\).  Every centered isotropic finite-state
exact-PRP law therefore admits a bounded smooth Fourier terminal condition
for which the global weak error is \(\Theta(h^{1/2})\).

\item For the centrally symmetric simplex, the \(O(h^{1/2})\)
orthogonal-martingale scale is sharp.  We compute its exact leading variance
constant.  Exact Fourier calculations and a semilinear manufactured
solution provide numerical checks of the predicted rates.
\end{enumerate}

The result is deliberately restricted to a single state-independent
finite-state increment reused at each time step.  It does not rule out
first- or higher-order BSDE algorithms that enlarge the one-step
information set, use several stages, iterated integrals, regression
variables, cubature constructions, or auxiliary randomness.

Fukasawa, Sato and Sekine~\cite{FukasawaSatoSekine2026} have recently studied the
minimal \(d+1\)-state lattice BS\(\Delta\)E from a complementary structural
viewpoint.  Their results concern, among other things, a constraint on the
driver, measure changes, nonlinear expectations, dynamically complete
lattice markets, and equilibrium.  In the present paper the same
minimal-state geometry is used instead as the exact-representation
benchmark: the question is what weak-approximation cost is imposed by that
representation property.  In this sense, their work exploits minimality,
whereas the present work quantifies the approximation gain obtained by
relaxing it.  A companion manuscript~\cite{TanakaMinimal2026} studies
finite-step quadratic replication and lattice geometry.  Its quadratic-budget
trace identity is closely related to the defect identity underlying
Theorem~\ref{thm:universal-defect}, whereas its main focus is the minimally
complete endpoint and Gamma-to-Delta aliasing rather than the asymptotic GKW
residual studied here.

The minimal-simplex Markovian BS\(\Delta\)E and its rough-data Wasserstein
convergence analysis appeared earlier in Chapter~8 of the author's doctoral
thesis~\cite{Thesis}.  Under Lipschitz/H\"older assumptions, that chapter
obtains \(O((1+\log N)N^{-1/2})\)-type bounds by a Wasserstein
central-limit comparison.  The present smooth first-order theorem uses a
different consistency--stability argument.  New results of the present paper
include the quadratic GKW representation defect and its sharp universal
trace-free lower bound, the joint functional limit of the state process and
normalized orthogonal martingale, the third-moment/representation
obstruction, the sharp Fourier lower bound, and the symmetric-simplex
orthogonal-martingale asymptotic.

\section{Third-moment-free increment laws and simplex examples}

We first state the moment class that drives the convergence theorem.

\begin{assumption}[Increment law]\label{ass:increment}
The random vector \(\xi\in\mathbb R^d\) has finite support, its law is
independent of the state and time level, and
\begin{equation}\label{eq:general-moments}
 \E[\xi]=0,\qquad
 \E[\xi\xi^\top]=I_d,\qquad
 \E[\xi^{\otimes3}]=0.
\end{equation}
Write
\[
 \kappa_4:=\E[|\xi|^4]<\infty.
\]
\end{assumption}

\begin{remark}[Why finite support is retained]\label{rem:finitesupport}
The local consistency and stability estimates below use only the displayed
moments (in particular \(\kappa_4<\infty\)) together with a
state-independent increment law; finite support is not essential to that
analytic part.  We retain it in the theorem because the second half of the
paper concerns the dimension of the one-step \(L^2\) space, exact
predictable representation, recombining rules, and finite-state
completeness.  Thus the convergence theorem is stated on the common class
on which the accuracy--representation comparison is made.
\end{remark}

For a smooth scalar function $\phi$, the notation
$\|D^k\phi\|_\infty$ means the supremum of the operator norm
of its $k$-linear derivative.  In particular, Taylor remainders are
estimated using $|D^k\phi(x)[v,\ldots,v]|\le
\|D^k\phi\|_\infty |v|^k$.

For a bounded function \(\phi\), define
\begin{align}
 P_h\phi(x)&:=\E[\phi(x+\sqrt h\,\xi)],\\
 D_h\phi(x)&:=\frac1{\sqrt h}\E[\phi(x+\sqrt h\,\xi)\xi],\\
 L_h\phi(x)&:=\frac{P_h\phi(x)-\phi(x)}h.
\end{align}

\begin{proposition}[Third-moment-free local consistency]\label{prop:local}
Under Assumption~\ref{ass:increment}, for \(\phi\in C_b^4(\R^d)\),
\[
 \norm{L_h\phi-\tfrac12\Delta\phi}_\infty
 \le \frac{\kappa_4}{24}h\norm{D^4\phi}_\infty,
\]
and for \(\phi\in C_b^3(\R^d)\),
\[
 \norm{D_h\phi-\nabla\phi}_\infty
 \le \frac{\kappa_4}{6}h\norm{D^3\phi}_\infty.
\]
\end{proposition}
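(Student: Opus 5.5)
The plan is to Taylor-expand $\phi(x+\sqrt h\,\xi)$ about $x$ pointwise in $x$ and then take expectations, killing the low-order terms with the moment conditions \eqref{eq:general-moments}. The remainder will be written in Lagrange (mean-value) form, which produces exactly the factorials $1/24$ and $1/6$.

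For the generator bound, with $\phi\in C_b^4$ I expand to third order with a fourth-order remainder:
\[
\phi(x+\sqrt h\,\xi)=\phi(x)+\sqrt h\,D\phi(x)[\xi]+\tfrac h2 D^2\phi(x)[\xi,\xi]+\tfrac{h^{3/2}}6 D^3\phi(x)[\xi,\xi,\xi]+R_4,
\]
where $R_4=\tfrac{h^2}{24}D^4\phi(x+\theta\sqrt h\,\xi)[\xi,\xi,\xi,\xi]$ for some $\theta\in(0,1)$. Since the support is finite, I can apply the scalar Taylor theorem to $t\mapsto\phi(x+t\sqrt h\,\xi)$ for each atom separately; alternatively the integral form of the remainder avoids measurability questions about $\theta$. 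Taking expectations, the linear term vanishes because $\E\xi=0$. The quadratic term gives $\tfrac h2\sum_{ij}\partial_{ij}\phi\,\E[\xi_i\xi_j]=\tfrac h2\Delta\phi$ because $\E[\xi\xi^\top]=I_d$. The cubic term vanishes because $\E[\xi^{\otimes3}]=0$, using that $D^3\phi(x)[\xi,\xi,\xi]$ is linear in $\xi^{\otimes3}$. Then
\[
|R_4|\le\tfrac{h^2}{24}\norm{D^4\phi}_\infty|\xi|^4,
\]
so subtracting $\phi(x)$, dividing by $h$ and taking expectations gives the bound $\tfrac{\kappa_4}{24}h\norm{D^4\phi}_\infty$, uniformly in $x$.

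For the gradient bound, with $\phi\in C_b^3$ I expand to second order with a third-order remainder, multiply by $\xi$, and divide by $\sqrt h$. The zeroth-order term gives $\phi(x)\E\xi/\sqrt h=0$. The first-order term gives $\E[\xi\xi^\top]\nabla\phi(x)=\nabla\phi(x)$. The second-order term is $\tfrac{\sqrt h}{2}\E[D^2\phi(x)[\xi,\xi]\,\xi]$, whose components are contractions of $\E[\xi^{\otimes3}]$ with $D^2\phi(x)$, so it vanishes. The remainder is $\tfrac{h}{6}\E[D^3\phi(x+\theta\sqrt h\,\xi)[\xi,\xi,\xi]\,\xi]$.

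The only point needing care is measuring this vector remainder in the Euclidean norm. For each realization,
\[
|D^3\phi(\cdot)[\xi,\xi,\xi]\,\xi|\le\norm{D^3\phi}_\infty|\xi|^3\,|\xi|=\norm{D^3\phi}_\infty|\xi|^4.
\]
Moving the norm inside the expectation by Jensen's inequality then gives $\tfrac h6\kappa_4\norm{D^3\phi}_\infty$. This is where $\kappa_4$ rather than a third moment appears, and it matches the stated constant.

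I expect no genuine obstacle. The one subtle step is that the third-moment tensor condition kills the cubic term in both expansions, which requires only multilinearity of $D^3\phi(x)$ and $D^2\phi(x)$.
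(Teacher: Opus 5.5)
Your proposal is correct and follows essentially the same route as the paper: a fourth-order Taylor expansion for $L_h$ and a second-order expansion with third-order remainder for $D_h$. In both, the linear, quadratic and cubic terms are disposed of by centering, isotropy and $\E[\xi^{\otimes3}]=0$, and the remainders are bounded by $\|D^k\phi\|_\infty|\xi|^4$, giving the constants $\kappa_4/24$ and $\kappa_4/6$.
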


\begin{proof}
Taylor expansion gives
\[
 \phi(x+\sqrt h\,\xi)
 =\phi(x)+\sqrt h\,D\phi[\xi]
 +\frac h2D^2\phi[\xi,\xi]
 +\frac{h^{3/2}}6D^3\phi[\xi,\xi,\xi]+R_4,
\]
with
\[
 |R_4|\le \frac{h^2}{24}\norm{D^4\phi}_\infty|\xi|^4.
\]
Centering, isotropy and the vanishing third moment give the first estimate.
For \(D_h\), Taylor expansion through second order gives the leading term
\(\nabla\phi\); the order-\(\sqrt h\) contribution is the contraction of
\(D^2\phi\) with \(\E[\xi^{\otimes3}]\), hence vanishes, and the vector
remainder is bounded by
\[
 \frac h6\norm{D^3\phi}_\infty\E[|\xi|^4].
\]
\end{proof}

\begin{remark}[Non-identity covariance]
For positive definite covariance \(\Sigma\), whitening by
\(\Sigma^{-1/2}\) preserves exact scalar PRP and vanishing third moment.
Thus the representation obstruction is unchanged by a nondegenerate linear
covariance transformation.
\end{remark}

\subsection{Regular-simplex examples}

Let \(\widehat v_0,\ldots,\widehat v_d\in\R^d\) satisfy
\begin{equation}\label{eq:simplex}
 \sum_{i=0}^d\widehat v_i=0,\qquad
 \frac1{d+1}\sum_{i=0}^d\widehat v_i\widehat v_i^\top=I_d,
\end{equation}
or equivalently
\[
 |\widehat v_i|^2=d,\qquad
 \widehat v_i^\top\widehat v_j=-1\quad(i\ne j).
\]
Such a family is unique up to an orthogonal transformation.

The minimal simplex increment \(\xi^{\rm min}\) is uniform on
\(\{\widehat v_i\}_{i=0}^d\).  The symmetric simplex increment \(\xi\) is
defined by
\begin{equation}\label{eq:sym-law}
 \mathbb P(\xi=\widehat v_i)
 =\mathbb P(\xi=-\widehat v_i)
 =\frac1{2(d+1)}.
\end{equation}
Both satisfy
\[
 \E[\xi]=0,\qquad \E[\xi\xi^\top]=I_d,\qquad |\xi|^2=d,
\]
while the symmetric law additionally satisfies
\begin{equation}\label{eq:odd}
 \E[\xi^{\otimes(2k+1)}]=0,\qquad k\ge0.
\end{equation}

For comparison, the minimal simplex generally retains the third-moment
terms
\[
 L_h^{\rm min}\phi
 =\frac12\Delta\phi
 +\frac{\sqrt h}{6}\E[D^3\phi[\xi^{\rm min},\xi^{\rm min},\xi^{\rm min}]]
 +O(h),
\]
and
\[
 D_h^{\rm min}\phi
 =\nabla\phi
 +\frac{\sqrt h}{2}
 \E\!\left[\xi^{\rm min}
 D^2\phi[\xi^{\rm min},\xi^{\rm min}]\right]+O(h).
\]

\begin{remark}[A second first-order finite-state rule]\label{rem:axisrule}
The convergence theorem is not specific to the symmetric simplex.  For
\(d\ge1\), the axis rule
\[
 \mathbb P(\xi=\sqrt d\,e_j)
 =\mathbb P(\xi=-\sqrt d\,e_j)=\frac1{2d},
 \qquad j=1,\ldots,d,
\]
also satisfies
\[
 \E[\xi]=0,\qquad \E[\xi\xi^\top]=I_d,\qquad
 \E[\xi^{\otimes3}]=0,\qquad \E[|\xi|^4]=d^2,
\]
so Theorem~\ref{thm:main} applies.  For \(d\ge2\) it has
\(2d\) one-step atoms, so the centered scalar space has dimension \(2d-1\)
and the GKW complement has dimension \(d-1\).  We use the symmetric simplex
for the sharp comparison because it is the direct antipodal extension of
the minimal regular-simplex rule and keeps the Fourier geometry explicit.
\end{remark}

\begin{proposition}[Minimal support and one-step PRP]\label{prop:minprp}
Let a centered nondegenerate increment in \(\R^d\) have \(m\) distinct atoms
of positive probability.  The centered scalar one-step space has dimension
\(m-1\), whereas the linear stochastic-integral sector
\(\{a^\top\xi:a\in\R^d\}\) has dimension \(d\).  Hence exact scalar
one-step PRP is possible only if \(m=d+1\), and for a nondegenerate
\(d+1\)-point isotropic law it holds automatically.
\end{proposition}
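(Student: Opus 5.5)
The plan is to identify the one-step $L^2$ space with functions on the atoms. Let the atoms be $x_1,\dots,x_m\in\R^d$ with probabilities $p_1,\dots,p_m>0$. Every $\sigma(\xi)$-measurable scalar random variable is then $f(\xi)$ for some function $f$ on $\{x_1,\dots,x_m\}$. Hence $L^2(\sigma(\xi))\cong\R^m$, and the inner product $\langle f,g\rangle=\sum_i p_i f(x_i)g(x_i)$ is positive definite because every $p_i>0$. The centered subspace is the orthogonal complement of the constants, so it has dimension $m-1$.

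For the linear sector, consider the map $a\mapsto a^\top\xi$. It is injective as a map into $L^2$, since
\[
\E[(a^\top\xi)^2]=a^\top\E[\xi\xi^\top]a,
\]
and nondegeneracy means the covariance matrix is positive definite. This sector is contained in the centered space because $\E[\xi]=0$. It therefore has dimension exactly $d$, and $d\le m-1$. Exact scalar one-step PRP means that the linear sector equals the centered space, which holds if and only if $d=m-1$, that is, $m=d+1$.

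For the converse, take a nondegenerate $(d+1)$-point law (isotropy is a special case). The linear sector is a $d$-dimensional subspace of the $d$-dimensional centered space, so the two coincide and PRP holds automatically.

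The only point needing care is the injectivity and positivity argument. Positive probabilities make the $L^2$ inner product nondegenerate, and a nondegenerate covariance makes the linear map injective. Beyond this there is no real obstacle. If desired, one can add the explicit version: for a centered $f$, the representation is $f(\xi)=a^\top\xi$ with
\[
a=\E[\xi\xi^\top]^{-1}\E[f(\xi)\xi],
\]
which is exact in the case $m=d+1$.
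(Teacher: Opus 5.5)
Your proposal is correct and follows the paper's own dimension count: the centered space has dimension $m-1$, the linear sector has dimension $d$, and PRP forces the two to coincide. The differences are minor and slightly favour your version. You get $m\ge d+1$ directly from the inclusion of the linear sector in the centered space, where the paper uses an affine-spanning remark. You use a positive-definite covariance rather than isotropy, which matches the proposition's ``nondegenerate'' hypothesis and makes the ``holds automatically'' direction explicit.
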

\begin{proof}
Scalar functions on \(m\) atoms form an \(m\)-dimensional space and the
mean-zero hyperplane has dimension \(m-1\).  Isotropy makes the coordinate
functions linearly independent in \(L^2\), so their span has dimension
\(d\).  Moreover, a centered nondegenerate \(d\)-dimensional law must have at least \(d+1\) affinely spanning support points, and hence \(m\ge d+1\).  Exact scalar one-step PRP requires the \(m-1\)-dimensional centered scalar space to coincide with the \(d\)-dimensional linear increment sector, and therefore \(m-1=d\).
\end{proof}

\begin{remark}
For the symmetric simplex, \(m=2(d+1)\), so the centered scalar space has
dimension \(2d+1\).  The linear increment sector still has dimension \(d\);
the GKW complement therefore has dimension \(d+1\) when \(d\ge2\).
The one-dimensional case is exceptional because the labels
\(\pm\widehat v_i\) duplicate the same two Bernoulli support points.
\end{remark}

\section{The finite-state BS\texorpdfstring{\(\Delta\)}{}E}

Let \(t_n=nh\), \(h=T/N\), and
\[
 B_{n+1}^h=B_n^h+\sqrt h\,\xi_{n+1},
\]
where the increments are i.i.d.\ with a law satisfying
Assumption~\ref{ass:increment}.  We consider the Markov recursion
\begin{equation}\label{eq:scheme}
 \begin{split}
 U_N(x)&=g(x),\\
 U_n(x)&=P_hU_{n+1}(x)
 +h f\!\left(t_{n+1},x,U_n(x),D_hU_{n+1}(x)\right).
 \end{split}
\end{equation}
Set
\[
 Y_n^h=U_n(B_n^h),\qquad
 Z_{n+1}^h=D_hU_{n+1}(B_n^h).
\]

For every \(d\ge1\), the one-step Galtchouk--Kunita--Watanabe
decomposition with respect to the linear span of the increment takes the
form
\begin{equation}\label{eq:gkw}
 Y_{n+1}^h-\E_n[Y_{n+1}^h]
 =(Z_{n+1}^h)^\top\Delta B_{n+1}^h+\Delta N_{n+1}^h,
\end{equation}
where \(N_0^h=0\) and
\[
 \E_n[\Delta N_{n+1}^h]=0,\qquad
 \E_n[\Delta N_{n+1}^h\,\Delta B_{n+1}^h]=0.
\]
Thus \(N^h\) records the part of the one-step martingale increment that is
not represented by the linear increment sector; it may vanish identically.
In particular, in the one-dimensional Bernoulli case the centered one-step
scalar \(L^2\)-space is already exhausted by the linear increment, so that
\(N^h\equiv0\).  By contrast, for \(d\ge2\), Theorem~\ref{thm:prp} below
shows that a law satisfying Assumption~\ref{ass:increment} cannot have exact
scalar one-step PRP.  The multidimensional representation obstruction is
therefore separate from the validity of the decomposition~\eqref{eq:gkw}
and from the convergence theorem below, both of which also cover \(d=1\).

\section{Assumptions and the main convergence theorem}

Let \(u\) be a classical solution of
\begin{equation}\label{eq:pde}
 \partial_tu+\frac12\Delta u+
 f(t,x,u,\nabla u)=0,\qquad u(T,x)=g(x).
\end{equation}

\begin{assumption}[Smooth numerical regime]\label{ass:Hnum}
The following conditions hold.

\begin{enumerate}[label=(H\arabic*),leftmargin=2.5em]
\item The driver \(f:[0,T]\times\R^d\times\R\times\R^d\to\R\)
is continuous and globally Lipschitz in \((x,y,z)\), uniformly in time.
Let \(L_y\) and \(L_z\) denote Lipschitz constants in the \(y\)- and
\(z\)-variables, respectively.  It is Lipschitz in \(t\), uniformly in
\(x,y,z\).

\item The map \(f\) is continuously differentiable in \((x,y,z)\).
The derivatives \(f_x,f_y,f_z\) are globally bounded and jointly Lipschitz
in \((t,x,y,z)\).  In addition,
\[
 f_y(t,x,y,z)\le \mu_y
\]
on \( [0,T]\times\R^d\times\R\times\R^d \).  Equivalently, the \(y\)-dependence is one-sided Lipschitz with
constant \(\mu_y\).

\item The semilinear PDE
\[
 \partial_tu+\frac12\Delta u+f(t,x,u,\nabla u)=0,\qquad u(T,\cdot)=g,
\]
has a classical solution on the closed strip.  All lower derivatives used
below are bounded, and in particular
\[
 u_{tt},\quad \nabla u_{tt},\quad
 D^5u,\quad D^4u,\quad D^3u,\quad
 D^3u_t,\quad D^2u_t,\quad \nabla u_t
\]
are bounded.

\item The mesh is sufficiently fine that
\[
 \delta_h:=1-h\mu_y>0.
\]
\end{enumerate}
\end{assumption}

\begin{remark}[On the regularity package]\label{rem:regularity}
Assumption~\ref{ass:Hnum} is intentionally stated at the solution level.
Its purpose is to make the numerical argument independent of a particular
high-order semilinear parabolic regularity theorem.  The proof uses no
sixth spatial derivative of \(u\) and no second time derivative of \(f\).
In particular, the bounds on \(u_{tt}\) and \(\nabla u_{tt}\) are
assumed at the solution level and used in the time-Taylor remainders in
Appendix~A; they are not derived here from the first-order regularity of
the driver.
Under standard smooth bounded coefficient hypotheses, the displayed
solution regularity can instead be obtained from classical parabolic/BSDE
regularity results.  We keep that analytic input separate from the
finite-state approximation argument.
\end{remark}

\begin{theorem}[First-order approximation under third-moment cancellation]\label{thm:main}
Under Assumptions~\ref{ass:increment} and \ref{ass:Hnum}, there exists a constant \(C\), independent
of sufficiently small \(h=T/N\), such that
\begin{align}
 \max_{0\le n\le N}\norm{U_n-u(t_n,\cdot)}_\infty&\le Ch,\label{eq:value}\\
 \max_{0\le n\le N}\norm{\nabla U_n-\nabla u(t_n,\cdot)}_\infty&\le Ch,\label{eq:grad}\\
 \max_{0\le n<N}\norm{D_hU_{n+1}-\nabla u(t_{n+1},\cdot)}_\infty&\le Ch.
 \label{eq:zfield}
\end{align}
Moreover, for the orthogonal martingale in \eqref{eq:gkw},
\begin{equation}\label{eq:N}
 \E[\max_{0\le n\le N}|N_n^h|^2]\le Ch.
\end{equation}
\end{theorem}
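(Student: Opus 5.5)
The plan is a consistency--stability argument carried out on the error field \(e_n:=U_n-u(t_n,\cdot)\) and on its gradient, followed by an \(L^2\) bound for the orthogonal martingale.

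\emph{Step 1: local truncation.} Write \(u_n:=u(t_n,\cdot)\). Substitute \(u_n\) into the scheme~\eqref{eq:scheme} to define the consistency residual
\[
 \tau_n:=u_n-P_hu_{n+1}-hf(t_{n+1},x,u_n,D_hu_{n+1}).
\]
Proposition~\ref{prop:local}, applied to \(u_{n+1}\) with bounded \(D^4u\) and \(D^3u\), gives
\[
 P_hu_{n+1}=u_{n+1}+\tfrac h2\Delta u_{n+1}+O(h^2),\qquad D_hu_{n+1}=\nabla u_{n+1}+O(h).
\]
A time-Taylor expansion of \(u_{n+1}-u_n\) uses \(u_t=-\tfrac12\Delta u-f\), the bound on \(u_{tt}\), and the Lipschitz continuity of \(f\) in \(t\) and \(y\) (the latter to replace \(u_{n+1}\) by \(u_n\)). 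Together these give \(\norm{\tau_n}_\infty\le Ch^2\).

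The gradient needs the differentiated version. Differentiating the residual in \(x\) produces terms \(\nabla P_hu_{n+1}=P_h\nabla u_{n+1}\) and \(\nabla D_hu_{n+1}=D_h\nabla u_{n+1}\). Here \(\nabla u\) needs fourth, and then fifth, spatial derivatives to apply Proposition~\ref{prop:local} componentwise; this is why \(D^5u\), \(D^3u_t\) and \(\nabla u_{tt}\) are assumed. Using that \(f_x,f_y,f_z\) are bounded and Lipschitz, one gets \(\norm{\nabla\tau_n}_\infty\le Ch^2\).

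\emph{Step 2: stability for \(e_n\).} Subtract the two recursions:
\[
 e_n-h\bigl[f(\cdot,U_n,D_hU_{n+1})-f(\cdot,u_n,D_hU_{n+1})\bigr]
 =P_he_{n+1}+h\bigl[f(\cdot,u_n,D_hU_{n+1})-f(\cdot,u_n,D_hu_{n+1})\bigr]-\tau_n.
\]
The left side is \((1-h\beta)e_n\) with \(\beta\le\mu_y\), by the mean value theorem and (H2). The \(z\)-difference is \(h\,\gamma^\top D_he_{n+1}\) with \(|\gamma|\le L_z\).

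The key move is to absorb this term into the propagator:
\[
 P_he_{n+1}+h\gamma^\top D_he_{n+1}=\E\bigl[e_{n+1}(x+\sqrt h\xi)(1+\sqrt h\gamma^\top\xi)\bigr].
\]
This is a signed average with total mass \(\E|1+\sqrt h\gamma^\top\xi|\). Because the support is finite, \(\xi\) is bounded, so for small \(h\) the weight is nonnegative and has total mass exactly \(1\). Hence \(\norm{e_n}_\infty\le\delta_h^{-1}(\norm{e_{n+1}}_\infty+Ch^2)\), and a discrete Gronwall argument gives \(Ch\), which is~\eqref{eq:value}.

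\emph{Step 3: gradient.} Differentiate the scheme in \(x\). The equation for \(\nabla U_n\) is the linearization with coefficients \(f_x,f_y,f_z\) evaluated at numerical arguments. The only troublesome term is \(h f_z\,\nabla D_hU_{n+1}=hf_z\,D_h\nabla U_{n+1}\); it is absorbed by the same positive-weight trick, now applied to \(\nabla e\). The remaining coefficient differences are \(O(h)\cdot(|e_n|+|D_he_{n+1}|)\). Since \(D_he_{n+1}\) is not controlled by \(\norm{e}_\infty\) alone without losing \(h^{-1/2}\), I would instead use \(D_he_{n+1}=P_h\nabla e_{n+1}+O(h)\,\norm{D^2e}\)-type identities. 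More simply, \(D_h\phi=\E[\nabla\phi(x+\theta\sqrt h\xi)\xi\xi^\top]\) averages, so \(\norm{D_he}_\infty\le C\norm{\nabla e}_\infty\).

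This closes a Gronwall inequality for \(\norm{\nabla e_n}_\infty\) with \(O(h^2)\) source, giving~\eqref{eq:grad}. Then~\eqref{eq:zfield} follows from
\[
 D_hU_{n+1}-\nabla u_{n+1}=D_he_{n+1}+(D_hu_{n+1}-\nabla u_{n+1}),
\]
bounded by \(C\norm{\nabla e_{n+1}}_\infty+Ch\). I expect this step to be the main obstacle, chiefly in tracking the uniform regularity needed to differentiate \(U_n\): it needs a Lipschitz bound on \(\nabla U_n\) obtained inductively, and then the linearization errors are Lipschitz-controlled.

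\emph{Step 4: orthogonal martingale.} By orthogonality in~\eqref{eq:gkw}, \(\E_n|\Delta N_{n+1}|^2\) is the residual variance of \(U_{n+1}(B_n+\sqrt h\xi)\) after projection onto the affine span of \(\xi\). It is therefore at most the second moment of
\[
 U_{n+1}(x+\sqrt h\xi)-U_{n+1}(x)-\sqrt h\nabla U_{n+1}(x)^\top\xi .
\]
Step 3 gives a uniform Lipschitz bound on \(\nabla U_{n+1}\), or alternatively one can write \(U=u+e\) with \(\norm{\nabla e}\le Ch\). This makes the quantity \(O(h)\) pointwise, so \(\E|\Delta N_{n+1}|^2\le Ch^2\). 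Summing over \(N=T/h\) steps and applying Doob's inequality gives~\eqref{eq:N}.
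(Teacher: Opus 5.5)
Your argument is correct and follows the paper's consistency--stability architecture: an $O(h^2)$ residual, value stability by absorbing the $z$-secant into a tilted propagator, the differentiated recursion, the no-loss bound $\norm{D_he}_\infty\le d\norm{\nabla e}_\infty$, backward Gronwall, and a projection bound for $N^h$.

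The one genuinely different device is how you bound $T_b\phi(x)=\E[\phi(x+\sqrt h\,\xi)(1+\sqrt h\,b(x)^\top\xi)]$.

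\begin{itemize}
\item \textbf{Your route.} You use finite support to make the weights positive once $\sqrt h\,L_z\max|\xi|<1$. Then $T_b$ is a Markov average of norm exactly $1$, and the amplification factor is $\delta_h^{-1}$.
\item \textbf{The paper's route.} It deliberately avoids positivity and uses Cauchy--Schwarz with isotropy, $|T_b\phi|\le\sqrt{1+h|b|^2}\,\norm{\phi}_\infty$. This costs an extra $1+O(h)$ factor but needs only $\E[\xi\xi^\top]=I_d$, in line with Remark~\ref{rem:finitesupport} that finite support is inessential to the analytic part.
\end{itemize}

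Both close the Gronwall argument under the stated assumptions.

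Your closing worry in Step 3 is misplaced, and the remedy you propose would not obviously work. Write $p_n=\nabla u_n$, $P_n=\nabla U_n$, $q_n=p_n-P_n$.

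\begin{itemize}
\item \textbf{No Lipschitz bound on $\nabla U_n$ is needed.} $U_n\in C^1$ follows inductively from the implicit function theorem, since $1-hf_y\ge\delta_h>0$.
\item \textbf{The real key is how you pair terms when subtracting the differentiated recursions.} The \emph{numerical} coefficients $f_y^{{\rm num},n}$ and $f_z^{{\rm num},n}$ should multiply $q_n$ and $J_hq_{n+1}$. These are absorbed on the left and by the tilted operator, respectively. The coefficient differences $f_\bullet^{*,n}-f_\bullet^{{\rm num},n}$ should multiply only the exact quantities $p_n$ and $J_hp_{n+1}$, which are bounded by the regularity of $u$. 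This is exactly what makes $\norm{\eta_n}_\infty\le C_0(\norm{e_n}_\infty+\norm{D_he_{n+1}}_\infty)$ hold with $h$-independent constants.
\item \textbf{The opposite pairing leads to trouble.} You would then need $\norm{J_hP_{n+1}}_\infty=\norm{\nabla D_hU_{n+1}}_\infty$, i.e.\ essentially a Lipschitz bound on $\nabla U_{n+1}$. Deriving one inductively from the differentiated scheme produces a term quadratic in that Lipschitz constant: the Lipschitz dependence of $f_z$ multiplies $J_hP_{n+1}$. The result is a Riccati-type recursion that by itself controls the constant only on a short time horizon.
\end{itemize}

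In Step 4, the claim that Step 3 yields a Lipschitz bound on $\nabla U_{n+1}$ is likewise unsupported. Your alternative, writing $U=u+e$, works. The paper needs even less: $\Pi_\perp$ is an $L^2$ contraction, so $\norm{e_{n+1}}_\infty=O(h)$ alone gives $\E_n[|\Delta N_{n+1}^h|^2]\le Ch^2$, without the gradient estimate.
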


\begin{remark}[Time index of the predictable field]\label{rem:Zindex}
With the backward convention used here,
\[
 Z^h_{n+1}=D_hU_{n+1}(B_n^h)
 =\frac1h\E_n[Y^h_{n+1}\Delta B^h_{n+1}],
\]
so the deterministic field \(D_hU_{n+1}\) is naturally compared with
\(\nabla u(t_{n+1},\cdot)\).  This is the same indexing convention as in the
earlier lattice BS\(\Delta\)E.
\end{remark}

\begin{corollary}[Symmetric simplex]\label{cor:symmain}
The centrally symmetric simplex law \eqref{eq:sym-law} satisfies
Assumption~\ref{ass:increment} with \(\kappa_4=d^2\).  Hence
Theorem~\ref{thm:main} applies to the symmetric-simplex BS\(\Delta\)E.
\end{corollary}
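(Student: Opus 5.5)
The corollary only requires checking the three moment identities in \eqref{eq:general-moments} for the law \eqref{eq:sym-law}, finite support, and the value of \(\kappa_4\). Theorem~\ref{thm:main} then applies verbatim, since Assumption~\ref{ass:Hnum} concerns only the driver and the PDE solution and not the increment law. So the plan is a direct moment computation based on the simplex relations \eqref{eq:simplex}.

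\emph{Finite support and state independence} are immediate: the law is uniform on the \(2(d+1)\) points \(\pm\widehat v_i\) and does not depend on state or time. \emph{Odd moments:} the law is invariant under \(\xi\mapsto-\xi\), so every odd tensor moment changes sign under this symmetry and therefore vanishes. This gives \(\E[\xi]=0\) and \(\E[\xi^{\otimes3}]=0\), and more generally \eqref{eq:odd}. \emph{Covariance:} since \((-\widehat v_i)(-\widehat v_i)^\top=\widehat v_i\widehat v_i^\top\),
\[
 \E[\xi\xi^\top]=\frac{1}{2(d+1)}\sum_{i=0}^d 2\,\widehat v_i\widehat v_i^\top
 =\frac1{d+1}\sum_{i=0}^d\widehat v_i\widehat v_i^\top=I_d
\]
by \eqref{eq:simplex}.

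For the \emph{fourth moment}, the equivalent form of \eqref{eq:simplex} gives \(|\widehat v_i|^2=d\). Hence \(|\xi|^2=d\) almost surely, so \(\kappa_4=\E[|\xi|^4]=d^2<\infty\). The only step needing any care is the claimed equivalence \(|\widehat v_i|^2=d\), which is stated in the paper. If one wanted it independently, taking the trace of the second relation gives \(\sum_i|\widehat v_i|^2=d(d+1)\), and regularity of the simplex forces equal norms. There is no real obstacle: with Assumption~\ref{ass:increment} verified, Theorem~\ref{thm:main} yields \eqref{eq:value}--\eqref{eq:N} for the symmetric-simplex BS\(\Delta\)E, and the constant \(C\) depends on the law only through \(\kappa_4=d^2\).
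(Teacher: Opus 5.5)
Your proposal is correct and follows essentially the paper's argument: the paper gives no separate proof, and the corollary rests on the facts stated right after \eqref{eq:sym-law}, namely that central symmetry kills all odd moments, the doubled simplex sum gives \(\E[\xi\xi^\top]=I_d\), and \(|\widehat v_i|^2=d\) gives \(|\xi|^2=d\), hence \(\kappa_4=d^2\). Your optional aside that ``regularity of the simplex forces equal norms'' is a hand-wave, but it is not needed; to get it from \eqref{eq:simplex} alone, let \(V\) be the \((d+1)\times d\) matrix with rows \(\widehat v_i^\top\), note that \(V^\top V=(d+1)I_d\) and \(\mathbf 1^\top V=0\) make \(VV^\top/(d+1)\) the orthogonal projection onto \(\mathbf 1^\perp\), and conclude \(\widehat v_i^\top\widehat v_j=(d+1)\delta_{ij}-1\).
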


\section{Proof of the convergence theorem}

The proof is arranged in dependency order.  In particular, the value
estimate is established without using the gradient or martingale estimates;
the gradient estimate uses the value estimate; and the maximal
orthogonal-martingale estimate of this section uses only the value estimate.
The refined local GKW expansion in Section~\ref{sec:fclt}, by contrast,
also uses the gradient estimate.

\subsection{Well-posedness of the implicit step}

Fix \(n\) and suppose \(U_{n+1}\) is known.  At a fixed \(x\), put
\[
 r=P_hU_{n+1}(x),\qquad z=D_hU_{n+1}(x),
\]
and define
\[
 G(y)=y-hf(t_{n+1},x,y,z).
\]
For \(y,\widetilde y\in\R\), the one-sided Lipschitz condition gives
\[
 (G(y)-G(\widetilde y))(y-\widetilde y)
 \ge (1-h\mu_y)|y-\widetilde y|^2.
\]
Thus \(G\) is strictly increasing.  Taking \(\widetilde y=0\) also gives
linear coercivity as \(|y|\to\infty\), and continuity implies that \(G\) is
onto.  Therefore the scalar implicit equation has a unique solution.
Backward induction defines \(U_n\) uniquely.  Notice that no Banach
contraction condition \(hL_y<1\) is needed.

\subsection{Value consistency}

Write \(u_n=u(t_n,\cdot)\) and define the exact residual
\[
 \tau_n^u
 =u_n-P_hu_{n+1}
 -h f(t_{n+1},x,u_n,D_hu_{n+1}).
\]
By Proposition~\ref{prop:local} and Taylor expansion,
\[
 P_hu_{n+1}
 =u_{n+1}+\frac h2\Delta u_{n+1}+O(h^2)
 =u_n+h(u_t+\tfrac12\Delta u)_n+O(h^2),
\]
uniformly in \(x\).  Furthermore,
\[
 D_hu_{n+1}-\nabla u_n
 =(D_hu_{n+1}-\nabla u_{n+1})
 +(\nabla u_{n+1}-\nabla u_n)
 =O(h).
\]
The first term follows from Proposition~\ref{prop:local}; the second from
bounded \(\nabla u_t\).  Using the PDE at \((t_n,x)\), and the Lipschitz
dependence of \(f\) in \(t\) and \(z\), yields
\begin{equation}\label{eq:tauu-detailed}
 \norm{\tau_n^u}_\infty\le Ch^2.
\end{equation}
This is a pure consistency estimate and contains no numerical error.

\subsection{Value stability without positive tilted weights}

Let
\[
 e_n=u_n-U_n.
\]
For two pairs \((y,z)\), \((\widetilde y,\widetilde z)\), let \(a\)
denote the scalar secant in the \(y\)-direction,
\[
 a=
 \begin{cases}
 \displaystyle
 \frac{f(t,x,y,z)-f(t,x,\widetilde y,z)}{y-\widetilde y},
 &y\ne\widetilde y,\\[1.2ex]
 f_y(t,x,y,z),&y=\widetilde y,
 \end{cases}
\]
and let \(b\) denote the secant vector in the \(z\)-direction, defined,
when \(z\ne\widetilde z\), by
\[
 b=
 \frac{f(t,x,\widetilde y,z)-f(t,x,\widetilde y,\widetilde z)}
 {|z-\widetilde z|^2}(z-\widetilde z),
\]
with \(b=0\) otherwise.  Then
\[
 f(t,x,y,z)-f(t,x,\widetilde y,\widetilde z)
 =a(y-\widetilde y)+b^\top(z-\widetilde z),
\]
where \(a\le\mu_y\) and \(|b|\le L_z\).
At each \(x\), apply these secants with
\[
 t=t_{n+1},\qquad
 (y,z)=\bigl(u_n(x),D_hu_{n+1}(x)\bigr),\qquad
 (\widetilde y,\widetilde z)
 =\bigl(U_n(x),D_hU_{n+1}(x)\bigr),
\]
and denote the resulting coefficients by \(a_n(x)\) and \(b_n(x)\).
Subtracting the numerical and exact recursions then gives
\begin{equation}\label{eq:value-rec-detailed}
 (1-ha_n)e_n=T_{b_n}e_{n+1}+\tau_n^u,
\end{equation}
where, for a bounded vector field \(b\),
\[
 T_b\phi(x)
 =\E\!\left[\phi(x+\sqrt h\,\xi)
 (1+\sqrt h\,b(x)^\top\xi)\right].
\]
The weights in this expression need not be positive.  Cauchy--Schwarz and
isotropy give
\[
 |T_b\phi(x)|
 \le \norm{\phi}_\infty
 \left(\E[(1+\sqrt h\,b(x)^\top\xi)^2]\right)^{1/2}
 =\sqrt{1+h|b(x)|^2}\,\norm{\phi}_\infty.
\]
Hence, with \(\delta_h=1-h\mu_y\),
\[
 \norm{e_n}_\infty
 \le \rho_h\norm{e_{n+1}}_\infty
 +C\delta_h^{-1}h^2,\qquad
 \rho_h=\frac{\sqrt{1+hL_z^2}}{1-h\mu_y}.
\]
For sufficiently small \(h\), \(\rho_h=1+O(h)\) and
\(\sup_{k\le T/h}\rho_h^k<\infty\).  Since \(e_N=0\),
\[
 \max_n\norm{e_n}_\infty\le Ch.
\]
This proves \eqref{eq:value}.

\subsection{Differentiability and gradient consistency}

Assume inductively that \(U_{n+1}\in C^1\).  Since
\(1-hf_y\ge\delta_h>0\), the scalar implicit-function theorem gives
\(U_n\in C^1\).  Thus \(C^1\) regularity propagates backward from
\(U_N=g\).

Set \(P_n=\nabla U_n\) and
\[
 \Theta_n^{\rm num}
 =(t_{n+1},x,U_n(x),D_hU_{n+1}(x)),
\]
and write \(f_\bullet^{{\rm num},n}=f_\bullet(\Theta_n^{\rm num})\) for
\(\bullet\in\{x,y,z\}\).  For a vector field \(P:\R^d\to\R^d\), define
\[
 (J_hP)_{jk}(x):=h^{-1/2}\E[P_j(x+\sqrt h\,\xi)\xi_k].
\]
With column gradients, \(J_h(\nabla U)=\nabla(D_hU)\).  Differentiating the
recursion therefore gives
\begin{equation}\label{eq:P-rec}
 P_n=P_hP_{n+1}
 +h\{f_x^{{\rm num},n}+f_y^{{\rm num},n}P_n
 +(J_hP_{n+1})f_z^{{\rm num},n}\}.
\end{equation}
Hence \(h(J_hq)b=\sqrt h\,\E[q(x+\sqrt h\,\xi)\xi^\top b]\), and the
Cauchy--Schwarz argument for \(T_b\) gives
\(\|T_bq\|_\infty\le\sqrt{1+h\|b\|_\infty^2}\|q\|_\infty\) also for vector fields.
For \(p_n=\nabla u_n\), set
\[
 \Theta_n^\ast
 =(t_{n+1},x,u_n(x),D_hu_{n+1}(x)),
\]
and write \(f_\bullet^{*,n}=f_\bullet(\Theta_n^\ast)\) for
\(\bullet\in\{x,y,z\}\).  Differentiation of the PDE consistency expansion
gives the analogous identity
\[
 p_n=P_hp_{n+1}
 +h\{f_x^{*,n}+f_y^{*,n}p_n+(J_hp_{n+1})f_z^{*,n}\}
 +\tau_n^p.
\]
The derivative package in Assumption~\ref{ass:Hnum} gives
\begin{equation}\label{eq:taup-detailed}
 \norm{\tau_n^p}_\infty\le Ch^2.
\end{equation}
A term-by-term derivation is recorded in Appendix~A.

\subsection{Gradient stability and absence of an inverse square-root loss}

Let
\[
 q_n=p_n-P_n=\nabla u_n-\nabla U_n.
\]
Subtracting the differentiated recursions, define the numerical
derivative coefficients
\[
 \alpha_n(x):=f_y^{{\rm num},n}(x),\qquad
 \beta_n(x):=f_z^{{\rm num},n}(x).
\]
Then
\[
 (1-h\alpha_n)q_n=T_{\beta_n}q_{n+1}+h\eta_n+\tau_n^p.
\]
Here
\[
 \eta_n
 =(f_x^{*,n}-f_x^{{\rm num},n})
 +(f_y^{*,n}-f_y^{{\rm num},n})p_n
 +(J_hp_{n+1})(f_z^{*,n}-f_z^{{\rm num},n}).
\]
Thus coefficient differences multiply exact-solution quantities only.
Since \(\alpha_n\le\mu_y\), \(|\beta_n|\le L_z\), and the derivatives of
\(f\) are jointly Lipschitz,
\[
 \norm{\eta_n}_\infty
 \le C_0\left(\norm{e_n}_\infty+
            \norm{D_he_{n+1}}_\infty\right),
\]
where \(C_0\) depends only on the constants in Assumption~\ref{ass:Hnum}
and on the corresponding uniform bounds for the exact solution \(u\); in
particular, it is independent of \(h\), \(n\), and of any a priori bound on
\(P_k=\nabla U_k\).

The potentially problematic term \(D_he\) does not cost \(h^{-1/2}\).
Indeed, since \(\E[\xi]=0\),
\[
 D_he(x)=
 \E\!\left[
 \frac{e(x+\sqrt h\,\xi)-e(x)}{\sqrt h}\,\xi
 \right].
\]
The fundamental theorem of calculus, followed by expectation, yields
\[
 |D_he(x)|\le \norm{\nabla e}_\infty\E[|\xi|^2]
 =d\norm{\nabla e}_\infty,
\]
and hence
\begin{equation}\label{eq:no-loss}
 \norm{D_he}_\infty\le d\norm{\nabla e}_\infty.
\end{equation}
Using the already proved value estimate and the identity
\(\nabla e_{n+1}=q_{n+1}\),
\[
 \norm{\eta_n}_\infty
 \le C_0(h+\norm{q_{n+1}}_\infty).
\]
This is a closed backward estimate: it involves only the already established
value error and the next-step gradient error.  Consequently, the tilted-operator estimate gives, with a constant \(C\) depending only on the same fixed data as \(C_0\),
\[
 \norm{q_n}_\infty
 \le(1+Ch)\norm{q_{n+1}}_\infty+Ch^2.
\]
Since \(q_N=0\), backward discrete Gronwall proves
\[
 \max_n\norm{q_n}_\infty\le Ch,
\]
which is \eqref{eq:grad}.

Finally,
\[
 D_hU_{n+1}-\nabla u_{n+1}
 =-D_he_{n+1}+
 (D_hu_{n+1}-\nabla u_{n+1}).
\]
The first term is \(O(h)\) by \eqref{eq:no-loss} and the gradient estimate,
and the second is \(O(h)\) by Proposition~\ref{prop:local}.  This proves
\eqref{eq:zfield}.

\subsection{Orthogonal martingale}

Conditionally on \(\mathcal F_n\), let \(\Pi_\perp\) denote the orthogonal
projection in the finite one-step \(L^2\)-space onto the complement of
constants and the linear span of \(\xi\).  Then
\[
 \Delta N_{n+1}^h
 =\Pi_\perp[
 U_{n+1}(B_n^h+\sqrt h\,\xi)].
\]
At \(x=B_n^h\),
\[
 u_{n+1}(x+\sqrt h\,\xi)
 =u_{n+1}(x)+\sqrt h\,\nabla u_{n+1}(x)^\top\xi+R_{n+1},
\]
where
\[
 |R_{n+1}|\le \frac h2\norm{D^2u}_\infty|\xi|^2.
\]
Projection kills the constant and linear terms.  Since orthogonal projection
is an \(L^2\) contraction, \(\E[|\xi|^4]=\kappa_4<\infty\), and
\(\norm{U_{n+1}-u_{n+1}}_\infty=O(h)\),
\[
 \E_n[|\Delta N_{n+1}^h|^2]\le Ch^2.
\]
The martingale increments are orthogonal across time, hence
\[
 \E[|N_N^h|^2]
 =\sum_{n=0}^{N-1}\E[|\Delta N_{n+1}^h|^2]
 \le Ch.
\]
Doob's \(L^2\) inequality gives
\[
 \E[\max_{0\le n\le N}|N_n^h|^2]\le Ch.
\]
This branch of the proof uses the value estimate but not the gradient or
\(Z\)-field estimate.

\subsection{Dependency graph}

The proof can be summarized by
\[
 \begin{array}{ccc}
 \text{local value consistency}&\longrightarrow&\text{value }O(h)\\
 &&\downarrow\\
 \text{local gradient consistency}&\longrightarrow&\text{gradient }O(h)
 \longrightarrow Z\text{-field }O(h),\\
 &\searrow&\\[-1ex]
 &&N^h:\ \E[\max|N^h|^2]=O(h).
 \end{array}
\]
The last branch records only the maximal \(L^2\) estimate proved above.
The refined expansion and functional limit in Section~\ref{sec:fclt} also
use the gradient estimate.  In particular there is no circular use of the
final \(Z\)-estimate in the gradient proof.

\section{Quadratic representation defect and functional limit}\label{sec:fclt}

The preceding maximal estimate identifies the scale of the orthogonal
martingale but not its limiting law.  We now refine the one-step GKW
expansion.  The result also identifies the fourth-order quantity that
controls the residual fluctuation once the third moment has been cancelled.

For $A\in\mathbb S^d$, put
\[
 \mathcal H_1:=\operatorname{span}\{1,\xi_1,\ldots,\xi_d\}\subset L^2(\mu),
 \qquad q_A(\xi):=\frac12\xi^\top A\xi,
\]
and define
\[
 \mathcal R_A:=(I-P_{\mathcal H_1})q_A,
 \qquad
 \Gamma_\mu(A):=\E[\mathcal R_A(\xi)^2].
\]
Writing
\[
 M_{ijkl}:=\E[\xi_i\xi_j\xi_k\xi_l],\qquad
 M_4[A,A]:=\sum_{i,j,k,l}M_{ijkl}A_{ij}A_{kl},
\]
we first record the one-step geometry.

\begin{proposition}[Quadratic representation defect]\label{prop:quad-defect}
Under Assumption~\ref{ass:increment}, for every $A\in\mathbb S^d$,
\[
 \mathcal R_A(\xi)=\frac12\{\xi^\top A\xi-\operatorname{tr}A\},
\]
and
\begin{equation}\label{eq:Gamma-defect}
 \Gamma_\mu(A)=\frac14\{M_4[A,A]-(\operatorname{tr}A)^2\}.
\end{equation}
\end{proposition}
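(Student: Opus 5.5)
The plan is to compute the projection $P_{\mathcal H_1}q_A$ explicitly, using the moment conditions of Assumption~\ref{ass:increment}, and then obtain $\Gamma_\mu(A)$ by expanding a square. Isotropy and centering make $\{1,\xi_1,\ldots,\xi_d\}$ an orthonormal family in $L^2(\mu)$: we have $\E[1]=1$, $\E[\xi_i]=0$, and $\E[\xi_i\xi_j]=\delta_{ij}$. Linear independence was already noted in Proposition~\ref{prop:minprp}. Hence the orthogonal projection onto $\mathcal H_1$ is given by the Fourier-type formula
\[
 P_{\mathcal H_1}q_A=\E[q_A]\cdot 1+\sum_{k=1}^d\E[q_A\,\xi_k]\,\xi_k .
\]

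The coefficients come straight from the moment conditions. The constant coefficient is
\[
 \E[q_A]=\tfrac12\sum_{ij}A_{ij}\E[\xi_i\xi_j]=\tfrac12\operatorname{tr}A .
\]
The linear coefficients are
\[
 \E[q_A\xi_k]=\tfrac12\sum_{ij}A_{ij}\E[\xi_i\xi_j\xi_k]=0,
\]
which vanish because $\E[\xi^{\otimes3}]=0$. This is the only place the third-moment cancellation enters, and it is the conceptual point: without it the linear part of $q_A$ would be partially absorbed into the $Z$-direction, and the residual would pick up the correction $-\tfrac12\sum_k(A:\E[\xi\xi\xi_k])\xi_k$. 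Therefore $P_{\mathcal H_1}q_A=\tfrac12\operatorname{tr}A$, and
\[
 \mathcal R_A(\xi)=\tfrac12\{\xi^\top A\xi-\operatorname{tr}A\}.
\]

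For the defect, expand the square. Since $\E[\xi^\top A\xi]=\operatorname{tr}A$, we get
\[
 \E[\mathcal R_A^2]=\tfrac14\bigl\{\E[(\xi^\top A\xi)^2]-(\operatorname{tr}A)^2\bigr\}.
\]
The remaining step is the identity
\[
 \E[(\xi^\top A\xi)^2]=\sum_{ijkl}A_{ij}A_{kl}\E[\xi_i\xi_j\xi_k\xi_l]=M_4[A,A],
\]
which yields \eqref{eq:Gamma-defect}. Finiteness is automatic, since $\kappa_4<\infty$ under finite support.

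There is no genuine obstacle here. The one point to handle carefully is justifying the projection formula, namely that the basis is orthonormal rather than merely spanning. This follows from $\E[\xi\xi^\top]=I_d$ together with $\E[\xi]=0$, so no Gram-matrix inversion is needed.
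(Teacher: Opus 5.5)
Your proposal is correct and follows essentially the same route as the paper: orthonormality of $1,\xi_1,\ldots,\xi_d$ from centering and isotropy, then $\E[q_A]=\tfrac12\operatorname{tr}A$ and $\E[q_A\xi_k]=0$ by third-moment cancellation, which gives the residual, and finally the squared $L^2$ norm gives $\Gamma_\mu(A)$. Your aside on the correction term in the non-cancelled case also matches the paper's Remark~\ref{rem:general-defect}.
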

\begin{proof}
By centering and isotropy, $1,\xi_1,\ldots,\xi_d$ are orthonormal in
$L^2(\mu)$.  Third-moment cancellation gives $\E[q_A\xi_k]=0$ for every
$k$, while $\E[q_A]=\frac12\operatorname{tr}A$.  Hence the displayed
residual formula follows from orthogonal projection.  Taking its squared
$L^2$ norm gives \eqref{eq:Gamma-defect}.
\end{proof}

\begin{remark}[General centered isotropic law]\label{rem:general-defect}
Without third-moment cancellation, if
$T_{ijk}=\E[\xi_i\xi_j\xi_k]$ and
$(T:A)_k=\sum_{i,j}T_{ijk}A_{ij}$, the same projection calculation gives
\[
 \mathcal R_A(\xi)
 =\frac12\{\xi^\top A\xi-\operatorname{tr}A-(T:A)^\top\xi\},
\]
\[
 \Gamma_\mu(A)
 =\frac14\{M_4[A,A]-(\operatorname{tr}A)^2-|T:A|^2\}.
\]
We use only the third-moment-free specialization in what follows.  This
restriction is also essential for the functional limit below: without
third-moment cancellation the first-order gradient estimate used here
is no longer available.  An $O(h^{1/2})$ gradient error may remain, in
which case the numerical-error contribution in the refined GKW expansion
is of order $h$ rather than $h^{3/2}$ and need not separate from the
quadratic residual $\mathcal R_A$.
\end{remark}

\begin{theorem}[Universal quadratic-defect lower bound]\label{thm:universal-defect}
Under Assumption~\ref{ass:increment}, assume $d\ge2$ and let
\[
 \mathbb S_0^d:=\{A\in\mathbb S^d:\operatorname{tr}A=0\},
 \qquad D_0:=\dim\mathbb S_0^d=\frac{(d-1)(d+2)}2.
\]
For every Frobenius-orthonormal basis
$\{A_1,\ldots,A_{D_0}\}$ of $\mathbb S_0^d$,
\begin{equation}\label{eq:total-defect-lb}
 \sum_{k=1}^{D_0}\Gamma_\mu(A_k)
 =\frac14\left(1-\frac1d\right)\E[|\xi|^4]
 \ge \frac{d(d-1)}4.
\end{equation}
Consequently,
\begin{equation}\label{eq:worst-defect-lb}
 \sup_{\substack{A\in\mathbb S_0^d\\ |A|_F=1}}
 \Gamma_\mu(A)\ge \frac{d}{2(d+2)}.
\end{equation}
The lower bound in \eqref{eq:total-defect-lb} is sharp, and equality holds
if and only if $|\xi|^2=d$ almost surely.
\end{theorem}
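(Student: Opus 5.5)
The plan is to use Proposition~\ref{prop:quad-defect}. On $\mathbb S_0^d$ the trace term vanishes, so $\Gamma_\mu(A)=\frac14 M_4[A,A]=\frac14\E[(\xi^\top A\xi)^2]$ for every trace-free $A$. For each fixed realization $\xi=v$, the map $A\mapsto v^\top A v=\langle A, vv^\top\rangle_F$ is a linear functional on $\mathbb S^d$. Summing its square over a Frobenius-orthonormal basis of $\mathbb S_0^d$ gives the squared Frobenius norm of the orthogonal projection of $vv^\top$ onto $\mathbb S_0^d$. That projection is $vv^\top-\frac{|v|^2}{d}I_d$, so its squared norm is
\[
 |v|^4-\frac{2|v|^4}{d}+\frac{|v|^4}{d}=\Bigl(1-\frac1d\Bigr)|v|^4 .
\]
This value does not depend on which orthonormal basis is chosen. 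Taking expectations then yields the identity $\sum_k\Gamma_\mu(A_k)=\frac14(1-\frac1d)\E[|\xi|^4]$.

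For the lower bound I will use Jensen's (or Cauchy--Schwarz's) inequality. Isotropy gives $\E[|\xi|^2]=\operatorname{tr}I_d=d$, hence $\E[|\xi|^4]\ge(\E|\xi|^2)^2=d^2$. Therefore
\[
 \sum_k\Gamma_\mu(A_k)\ge \frac14\Bigl(1-\frac1d\Bigr)d^2=\frac{d(d-1)}4 .
\]
Equality in Jensen holds exactly when $|\xi|^2$ is almost surely constant, and since its mean is $d$ the constant must be $d$. This gives the ``if and only if'' statement. For sharpness I will point to Corollary~\ref{cor:symmain}: the symmetric simplex satisfies $|\xi|^2=d$ and the remaining hypotheses of Assumption~\ref{ass:increment}, so equality is attained. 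The axis rule of Remark~\ref{rem:axisrule} would also work.

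The worst-case bound \eqref{eq:worst-defect-lb} follows by averaging. A supremum is at least the mean over the $D_0$ basis elements, which gives
\[
 \sup\Gamma_\mu\ge\frac{d(d-1)}{4D_0}=\frac{d(d-1)}{4}\cdot\frac{2}{(d-1)(d+2)}=\frac{d}{2(d+2)} .
\]
The hypothesis $d\ge2$ guarantees $D_0\ge1$, so the division is legitimate.

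The only step needing care is the basis-independence and trace computation. I will state explicitly that for any orthonormal basis $\{A_k\}$ of a subspace $S$ and any $X$, one has $\sum_k\langle A_k,X\rangle_F^2=|\Pi_SX|_F^2$, and that the projection onto $\mathbb S_0^d$ subtracts $\frac{\operatorname{tr}X}{d}I_d$. The rest is routine.
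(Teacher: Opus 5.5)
Your proposal is correct and follows essentially the same route as the paper: on the trace-free sector you reduce to $\Gamma_\mu(A)=\frac14\E[(\xi^\top A\xi)^2]$, apply Parseval to the projection $\xi\xi^\top-\frac{|\xi|^2}{d}I_d$, use Jensen for $\E[|\xi|^4]\ge d^2$ together with its equality case, and average over the $D_0$ basis elements to get the operator bound. Your explicit sharpness witness (the symmetric simplex or the axis rule, both with $|\xi|^2=d$) fills in a point that the paper's proof leaves implicit.
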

\begin{proof}
For $A_k\in\mathbb S_0^d$, Proposition~\ref{prop:quad-defect} gives
$4\Gamma_\mu(A_k)=\E[(\xi^\top A_k\xi)^2]$.  If $P_0$ denotes the
Frobenius-orthogonal projection from $\mathbb S^d$ onto $\mathbb S_0^d$,
Parseval's identity yields
\[
 4\sum_{k=1}^{D_0}\Gamma_\mu(A_k)
 =\E[\|P_0(\xi\xi^\top)\|_F^2]
 =\E\!\left[\left\|\xi\xi^\top-\frac{|\xi|^2}{d}I_d\right\|_F^2\right]
 =\left(1-\frac1d\right)\E[|\xi|^4].
\]
Since $\E[|\xi|^2]=d$, Jensen's inequality gives
$\E[|\xi|^4]\ge d^2$, with equality if and only if $|\xi|^2=d$ almost
surely.  Finally, the supremum in \eqref{eq:worst-defect-lb} is at least
the average of $\Gamma_\mu(A_k)$ over the $D_0$ basis elements, which gives
$d/[2(d+2)]$.
\end{proof}

\begin{corollary}[Sharpness and extremizers]\label{cor:defect-extremizers}
Among finite laws satisfying Assumption~\ref{ass:increment}, the operator
bound \eqref{eq:worst-defect-lb} is sharp in every dimension $d\ge2$.
Equality holds if and only if the normalized finite law $u=\xi/\sqrt d$ is
a weighted spherical $4$-design on $S^{d-1}$.  Here a weighted spherical
$t$-design means a finite subset of $S^{d-1}$ with positive weights summing
to one whose weighted averages of all polynomials of degree at most $t$ agree
with their averages under normalized surface measure.
\end{corollary}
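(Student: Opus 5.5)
Equality in \eqref{eq:worst-defect-lb} is a statement about the quadratic form $\Gamma_\mu$ restricted to the unit Frobenius sphere of $\mathbb S_0^d$, and the plan is to characterize it spectrally. By Proposition~\ref{prop:quad-defect}, on $\mathbb S_0^d$ we have $4\Gamma_\mu(A)=\E[(\xi^\top A\xi)^2]=\langle A,\mathcal M A\rangle_F$, where $\mathcal M:=P_0 M_4 P_0$ is a positive semidefinite operator on the $D_0$-dimensional space $\mathbb S_0^d$. The supremum in \eqref{eq:worst-defect-lb} is $\lambda_{\max}(\mathcal M)/4$. Its trace, computed in Theorem~\ref{thm:universal-defect}, is $(1-1/d)\E[|\xi|^4]\ge d(d-1)$. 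Hence $\lambda_{\max}\ge \operatorname{tr}\mathcal M/D_0\ge 2d/(d+2)$. Equality forces two conditions: $\E|\xi|^4=d^2$, i.e.\ $|\xi|^2=d$ a.s., and $\mathcal M=\frac{2d}{d+2}\,\mathrm{Id}_{\mathbb S_0^d}$ (all eigenvalues equal the average). So the whole task reduces to identifying when $\mathcal M$ is a scalar multiple of the identity.

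For the ``if'' direction I will use the standard Gaussian/spherical fourth moment. If $u=\xi/\sqrt d$ is a weighted spherical $4$-design, then $\E[u_iu_ju_ku_l]$ equals the uniform-sphere moment $\frac{\delta_{ij}\delta_{kl}+\delta_{ik}\delta_{jl}+\delta_{il}\delta_{jk}}{d(d+2)}$. Hence $M_{ijkl}=\frac{d}{d+2}(\delta\delta+\delta\delta+\delta\delta)$, and for trace-free $A$ this gives $\E[(\xi^\top A\xi)^2]=\frac{2d}{d+2}|A|_F^2$. So $\Gamma_\mu(A)=\frac{d}{2(d+2)}$ on the unit sphere and equality holds. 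Sharpness in every $d\ge2$ then needs the existence of a finite weighted spherical $4$-design with $\xi=\sqrt d\,u$ satisfying Assumption~\ref{ass:increment}. I will take a centrally symmetric design, which exists by Seymour--Zaslavsky or by explicit constructions; the design condition at degrees $\le4$ supplies the moments listed below.
\begin{itemize}
\item Central symmetry kills the odd moments, so $\E\xi=0$ and $\E\xi^{\otimes3}=0$.
\item The degree-$2$ design condition gives $\E uu^\top=I/d$, hence isotropy of $\xi$.
\end{itemize}
Any $4$-design with antipodal symmetrization also works. The symmetrization is exactly what produces third-moment cancellation, so it should be stated explicitly.

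For the ``only if'' direction, assume equality. Then $|u|=1$ a.s., and $\E[(u^\top Au)^2]=\frac{2}{d(d+2)}|A|_F^2$ for all trace-free $A$. Polarization gives $\E[(u^\top Au)(u^\top Bu)]$ for trace-free $A,B$. Since $u^\top Iu=1$, the value is also known when $A$ or $B$ is $I$: it is then determined by the second moments $\E uu^\top=I/d$. So the full quartic moment tensor, paired against symmetric matrices, matches the uniform sphere. Because $M_4$ is totally symmetric, this determines all degree-$4$ moments of $u$.

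Degrees $0$ and $2$ already match. Odd degrees $1$ and $3$ vanish by centering and Assumption~\ref{ass:increment}, matching the sphere. Since $|u|=1$, every polynomial of degree $\le4$ restricts on the sphere to a combination of these homogeneous moments. Hence $u$ is a weighted $4$-design, with weights equal to the atom probabilities, which are positive.

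The main obstacle I expect is the polarization step. It has to be checked carefully that equality of the quadratic form on $\mathbb S_0^d$, together with the mixed terms involving $I$, pins down the symmetric tensor $M_4$ completely. The key fact needed is that $\{\mathrm{sym}(A\otimes B)\}$ spans $\mathrm{Sym}^2(\mathbb S^d)$ and that $M_4$ is determined by its action there. This holds because the total symmetrization of $A\otimes A$ over rank-one $A=vv^\top$ already recovers $\E[(v^\top u)^4]$, which determines the quartic moments.
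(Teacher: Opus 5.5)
Your proposal is correct and follows the paper's proof essentially step for step. The trace identity from Theorem~\ref{thm:universal-defect}, together with the comparison of the largest eigenvalue with the average, forces $|\xi|^2=d$ a.s.\ and a scalar defect operator on $\mathbb S_0^d$; polarization and the split $A_0+(\operatorname{tr}A/d)I_d$ then recover the spherical fourth-moment identity. Your rank-one $vv^\top$ argument also cleanly justifies the paper's assertion that this bilinear form determines the symmetric fourth-moment tensor.

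One small correction concerns the existence step. Central symmetrization is not what produces third-moment cancellation, because a weighted spherical $4$-design already reproduces the (vanishing) sphere averages of all degree-one and degree-three polynomials. Hence any such design satisfies Assumption~\ref{ass:increment} directly, which is how the paper argues, and symmetrizing is harmless but unnecessary.
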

\begin{proof}
If $u$ is a weighted spherical $4$-design, its moments through degree four
agree with spherical measure.  Hence Assumption~\ref{ass:increment} holds
for $\xi=\sqrt d\,u$, and for $A\in\mathbb S_0^d$,
\[
 \E[(u^\top Au)^2]=\frac{2}{d(d+2)}|A|_F^2,
 \qquad
 \Gamma_\mu(A)=\frac{d}{2(d+2)}|A|_F^2.
\]
Thus equality is attained.  Such finite designs exist in every dimension;
see Seymour and Zaslavsky~\cite{SeymourZaslavsky1984}.

Conversely, suppose equality holds in \eqref{eq:worst-defect-lb}.  Since the
sum in \eqref{eq:total-defect-lb} is at least
$D_0 d/[2(d+2)]$ while every eigenvalue of the positive quadratic form
$\Gamma_\mu|_{\mathbb S_0^d}$ is at most its supremum, equality holds
throughout.  Therefore $|\xi|^2=d$ almost surely and
\[
 \Gamma_\mu(A)=\frac{d}{2(d+2)}|A|_F^2,
 \qquad A\in\mathbb S_0^d.
\]
With $u=\xi/\sqrt d$, polarization gives, for $A,B\in\mathbb S_0^d$,
\[
 \E[(u^\top Au)(u^\top Bu)]
 =\frac{2}{d(d+2)}\langle A,B\rangle_F.
\]
Writing an arbitrary symmetric matrix as
$A=A_0+(\operatorname{tr}A/d)I_d$ and using $|u|=1$ and isotropy then yields
\[
 \E[(u^\top Au)(u^\top Bu)]
 =\frac{2\langle A,B\rangle_F+
 (\operatorname{tr}A)(\operatorname{tr}B)}{d(d+2)}.
\]
This is the spherical fourth-moment identity.  The bilinear form
$(A,B)\mapsto \E[(u^\top Au)(u^\top Bu)]$ on $\mathbb S^d$ determines the
fully symmetric fourth-moment tensor of $u$.  Together with the degree one,
two and three moment identities in Assumption~\ref{ass:increment}, the
displayed identity therefore shows that all polynomial moments through
degree four agree with spherical measure.  Hence $u$ is a weighted spherical
$4$-design.
\end{proof}

\begin{remark}[Total versus directional defect]\label{rem:total-directional-defect}
Constant-radius rules $|\xi|^2=d$ minimize the total defect in
\eqref{eq:total-defect-lb}, but operator optimality additionally requires
isotropy of the defect over $\mathbb S_0^d$.  In $d=2$ the symmetric simplex
is the regular hexagon and attains \eqref{eq:worst-defect-lb}.  For $d>2$ it
does not: with $A_0=\widehat v_0\widehat v_0^\top-I_d$,
$|A_0|_F^2=d(d-1)$ and \eqref{eq:Gamma-rank-one} gives
\[
 \Gamma_{\rm sym}(A_0/|A_0|_F)=\frac{d-1}{4}
 >\frac{d}{2(d+2)}.
\]
\end{remark}

\begin{lemma}[Refined local GKW expansion]\label{lem:local-gkw-fclt}
Uniformly in $n$ and $x$,
\[
 \left\|\Pi_\perp U_{n+1}(x+\sqrt h\,\xi)
 -h\mathcal R_{D^2u(t_{n+1},x)}(\xi)\right\|_{L^2(\mu)}
 \le Ch^{3/2}.
\]
Consequently, with $A_n=D^2u(t_{n+1},B_n^h)$,
\[
 \Delta N_{n+1}^h=h\mathcal R_{A_n}(\xi_{n+1})+r_{n+1}^h,
 \qquad |r_{n+1}^h|\le Ch^{3/2}\quad\text{a.s.},
 \qquad \E_n[|r_{n+1}^h|^2]\le Ch^3.
\]
\end{lemma}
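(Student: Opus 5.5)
We split $U_{n+1}=u_{n+1}-e_{n+1}$, where $e_{n+1}=u_{n+1}-U_{n+1}$ as in the proof of Theorem~\ref{thm:main}, and treat the two pieces separately under the projection $\Pi_\perp$. This projection is the $L^2(\mu)$-orthogonal projection onto the complement of $\mathcal H_1$. It is therefore linear, an $L^2$ contraction, and annihilates every affine function of $\xi$.

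\textbf{Exact part.} For $x$ fixed, Taylor expand $u_{n+1}(x+\sqrt h\,\xi)$ to third order:
\[
 u_{n+1}(x)+\sqrt h\,\nabla u_{n+1}(x)^\top\xi+h\,q_{A}(\xi)
 +\tfrac{h^{3/2}}{6}D^3u_{n+1}(x)[\xi,\xi,\xi]+R_4 .
\]
Here $A=D^2u(t_{n+1},x)$ and $|R_4|\le \tfrac{h^2}{24}\|D^4u\|_\infty|\xi|^4$. The projection kills the constant and linear terms. Since $\Pi_\perp q_A=\mathcal R_A$ by definition (Proposition~\ref{prop:quad-defect}), the quadratic term contributes exactly $h\mathcal R_A(\xi)$. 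For the cubic term, contraction gives an $L^2$ bound $Ch^{3/2}\|D^3u\|_\infty(\E|\xi|^6)^{1/2}$, which is finite because the support is finite. The remainder contributes $O(h^2)$ in $L^2$.

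\textbf{Error part.} This is the step that needs care. The naive bound $\|\Pi_\perp e_{n+1}(x+\sqrt h\xi)\|\le\|e_{n+1}\|_\infty=O(h)$ is too weak, so we must exploit that $\Pi_\perp$ kills affine functions. Write
\[
 e_{n+1}(x+\sqrt h\,\xi)=e_{n+1}(x)+\int_0^1\sqrt h\,\nabla e_{n+1}(x+s\sqrt h\,\xi)^\top\xi\,ds .
\]
The constant term is annihilated. The integral term is bounded pointwise by $\sqrt h\,\|\nabla e_{n+1}\|_\infty|\xi|$. By the gradient estimate \eqref{eq:grad}, $\|\nabla e_{n+1}\|_\infty=\|q_{n+1}\|_\infty\le Ch$, so its $L^2(\mu)$ norm is at most $Ch^{3/2}d^{1/2}$. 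Contractivity of $\Pi_\perp$ then gives the required $Ch^{3/2}$. Here we use the $C^1$ regularity of $U_{n+1}$ established in the differentiability step. This is exactly where the first-order gradient estimate, and hence third-moment cancellation, enters, as anticipated in Remark~\ref{rem:general-defect}. Combining the exact and error parts yields the $L^2(\mu)$ estimate, uniformly in $n$ and $x$, since all constants depend only on the bounds in Assumption~\ref{ass:Hnum} and on $\xi$'s moments.

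\textbf{Consequence for $\Delta N^h_{n+1}$.} Evaluate at $x=B_n^h$, using $\Delta N^h_{n+1}=\Pi_\perp[U_{n+1}(B_n^h+\sqrt h\,\xi)]$ conditionally on $\mathcal F_n$, and define $r^h_{n+1}:=\Delta N^h_{n+1}-h\mathcal R_{A_n}(\xi_{n+1})$. The conditional second-moment bound $\E_n|r^h_{n+1}|^2\le Ch^3$ is the $L^2(\mu)$ estimate just proved. For the almost sure bound we use finiteness of the support. Let $m$ be the number of atoms and $p_{\min}$ the smallest atom probability. For any function $\psi$ on the support, $\max_j|\psi(\text{atom}_j)|\le p_{\min}^{-1/2}\|\psi\|_{L^2(\mu)}$. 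Applying this to $\psi=\Pi_\perp U_{n+1}(x+\sqrt h\,\cdot)-h\mathcal R_A$ gives $|r^h_{n+1}|\le p_{\min}^{-1/2}Ch^{3/2}$ almost surely. Finite support is thus used only here; the $L^2$ statement would survive with merely $\E|\xi|^6<\infty$.
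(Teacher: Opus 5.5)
Your proof is correct and follows the paper's route. You Taylor-expand the exact solution, represent the numerical error by a first-order integral controlled by the gradient estimate \eqref{eq:grad}, and use finite support through the smallest atom probability to pass between the $L^2(\mu)$ and pointwise bounds. The only differences from the paper are cosmetic:
\begin{itemize}
\item You do not subtract the linear term $\sqrt h\,\nabla e_{n+1}(x)^\top\xi$. This is harmless, since $\|\nabla e_{n+1}\|_\infty=O(h)$ already gives the $h^{3/2}$ bound.
\item You prove the $L^2$ bound first and deduce the almost-sure bound; the paper bounds pointwise on the support first.
\item Your closing aside that $\E|\xi|^6<\infty$ suffices holds only if you stop the Taylor expansion at second order with a cubic remainder. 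With your quartic remainder $R_4$ as written, you would need $\E|\xi|^8$.
\end{itemize}
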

\begin{proof}
Write $e_{n+1}=U_{n+1}-u(t_{n+1},\cdot)$.  Taylor expansion of the exact
solution gives
\[
 u(t_{n+1},x+\sqrt h\xi)
 =u+\sqrt h\nabla u^\top\xi+\frac h2\xi^\top D^2u\xi+R,
 \qquad \|R\|_{L^2(\mu)}\le Ch^{3/2}.
\]
Projection kills the constant and linear terms, so the exact-solution part
is $h\mathcal R_{D^2u(t_{n+1},x)}+O_{L^2}(h^{3/2})$.
For the numerical error, subtract the constant and linear terms annihilated
by $\Pi_\perp$ and write
\begin{align*}
 e_{n+1}(x+\sqrt h\xi)
 &=e_{n+1}(x)+\sqrt h\nabla e_{n+1}(x)^\top\xi\\
 &\quad+\sqrt h\int_0^1
 [\nabla e_{n+1}(x+s\sqrt h\xi)-\nabla e_{n+1}(x)]^\top\xi\,ds.
\end{align*}
Since the support of $\xi$ is finite, the Taylor remainder above is
uniformly $O(h^{3/2})$ on that support.  The same finite-support bound,
together with
\[
 \left|\sqrt h\int_0^1
 [\nabla e_{n+1}(x+s\sqrt h\xi)-\nabla e_{n+1}(x)]^\top\xi\,ds\right|
 \le C\sqrt h\|\nabla e_{n+1}\|_\infty,
\]
and the already proved gradient estimate \eqref{eq:grad}, shows that the
projected numerical-error remainder is uniformly $O(h^{3/2})$ on the
support.  Hence the total remainder is bounded pointwise by $Ch^{3/2}$;
the stated conditional $L^2$ estimate follows immediately.  Constants in these pointwise estimates depend on the fixed law $\mu$, in
particular through its smallest atom probability, but not on $h$ or $n$.
\end{proof}

Let $\bar B^h$ and $\bar N^h$ be the c\`adl\`ag step interpolations,
constant on $[t_n,t_{n+1})$, and put $M^h=\bar N^h/\sqrt h$.

\begin{lemma}[Predictable characteristics]\label{lem:char-fclt}
Set $\gamma(t,w)=\Gamma_\mu(D^2u(t,w))$.
\begin{equation}\label{eq:bracket-ucp}
 \sup_{0\le t\le T}
 \left|
 \langle M^h\rangle_t-
 \int_0^t\gamma(s,\bar B_s^h)\,ds
 \right|\xrightarrow{P}0.
\end{equation}
Moreover,
\[
 \langle\bar B^{h,i},\bar B^{h,j}\rangle_t
 =\delta_{ij}\sum_{t_{n+1}\le t}h,
 \qquad
 \langle\bar B^h,M^h\rangle_t\equiv0,
\]
and
\[
 \sup_{0\le t\le T}
 \bigl(|\Delta\bar B_t^h|+|\Delta M_t^h|\bigr)\longrightarrow0
\]
deterministically.  In particular, the joint martingale array
$(\bar B^h,M^h)$ satisfies the conditional Lindeberg condition.
\end{lemma}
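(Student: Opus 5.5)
We begin with the predictable characteristics of the interpolated arrays, which are computed directly from Lemma~\ref{lem:local-gkw-fclt}. Since $M^h$ jumps only at the grid times $t_{n+1}$, its predictable bracket is
\[
 \langle M^h\rangle_t=\sum_{t_{n+1}\le t}h^{-1}\E_n[|\Delta N_{n+1}^h|^2].
\]
We insert $\Delta N_{n+1}^h=h\mathcal R_{A_n}(\xi_{n+1})+r_{n+1}^h$ and expand the square. Because $\xi_{n+1}$ is independent of $\mathcal F_n$ and $A_n$ is $\mathcal F_n$-measurable, the main term gives $h^2\Gamma_\mu(A_n)$. The cross term is bounded by Cauchy--Schwarz as $2h\,\Gamma_\mu(A_n)^{1/2}(Ch^3)^{1/2}=O(h^{5/2})$, using that $\Gamma_\mu(A_n)\le C\|D^2u\|_\infty^2$. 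The remainder contributes $O(h^3)$. Summing over at most $N=T/h$ steps gives
\[
 \langle M^h\rangle_t=\sum_{t_{n+1}\le t}h\,\gamma(t_{n+1},B_n^h)+O(h^{1/2}),
\]
uniformly in $t$, and this error is deterministic.

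Next we compare the sum with $\int_0^t\gamma(s,\bar B^h_s)\,ds$. On $[t_n,t_{n+1})$ we have $\bar B^h_s=B_n^h$, so the only discrepancy there is $\gamma(t_{n+1},B_n^h)-\gamma(s,B_n^h)$. By Proposition~\ref{prop:quad-defect}, $\gamma(t,w)$ is a fixed quadratic form in $D^2u(t,w)$. Since $D^2u$ is bounded and $D^2u_t$ is bounded (Assumption~\ref{ass:Hnum}), $\gamma$ is Lipschitz in $t$ uniformly in $w$, so each interval contributes $O(h^2)$. There is also a boundary piece, at most one interval long, between the last grid point $t_{n+1}\le t$ and $t$ itself. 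Since $\gamma$ is bounded, this piece is $O(h)$. Together these give \eqref{eq:bracket-ucp}, in fact deterministically at rate $O(h^{1/2})$, which is stronger than convergence in probability.

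For the remaining brackets, isotropy and independence give $\E_n[\Delta B^{h,i}_{n+1}\Delta B^{h,j}_{n+1}]=h\delta_{ij}$, which is the stated formula for $\langle\bar B^{h,i},\bar B^{h,j}\rangle$. The cross bracket vanishes because
\[
 \E_n[\Delta N_{n+1}^h\Delta B_{n+1}^h]=0
\]
by the GKW orthogonality in \eqref{eq:gkw}. For the jumps, finite support gives $|\Delta\bar B^h|\le\sqrt h\max|\xi|$. Also $\mathcal R_{A}(\xi)$ is bounded on the support uniformly in $A$ with $|A|\le\|D^2u\|_\infty$, so $|\Delta M^h|\le h^{-1/2}(Ch+Ch^{3/2})=O(h^{1/2})$, again deterministically. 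Uniformly vanishing jumps make the conditional Lindeberg sum identically zero for small $h$.

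The only delicate point is the cross term in the bracket expansion. Lemma~\ref{lem:local-gkw-fclt} bounds the remainder only conditionally in $L^2$ (and pointwise), so we must keep the cross term at order $h^{5/2}$ per step and not lose a factor $h^{-1/2}$ after normalizing by $h^{-1}$; the Cauchy--Schwarz bound above does exactly this. The rest of the argument is bookkeeping.
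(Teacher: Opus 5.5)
Your proposal is correct and follows essentially the same route as the paper: you expand the conditional variance using Lemma~\ref{lem:local-gkw-fclt}, compare with the integral via a Lipschitz-in-time Riemann sum, get the state bracket from isotropy and the cross bracket from GKW orthogonality, and bound the jumps by finite support. You also make explicit two steps the paper leaves implicit, namely the Cauchy--Schwarz bound on the cross term (the pointwise bound $|r_{n+1}^h|\le Ch^{3/2}$ already gives this directly, so it is less delicate than you suggest) and the $O(h)$ contribution of the final partial interval. Your remark that the convergence in \eqref{eq:bracket-ucp} actually holds deterministically at rate $O(h^{1/2})$ is correct.
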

\begin{proof}
Lemma~\ref{lem:local-gkw-fclt} gives, uniformly in $n$,
\[
 \E_n[(\Delta N_{n+1}^h)^2]
 =h^2\Gamma_\mu(A_n)+O(h^{5/2}),
 \qquad A_n=D^2u(t_{n+1},B_n^h).
\]
After division by $h$ and summation over at most $T/h$ steps, the
accumulated remainder is $O(\sqrt h)$.  Since $D^2u$ and $D^2u_t$ are bounded and $\Gamma_\mu$ is a quadratic
form, $\gamma(t,w)$ is uniformly Lipschitz in $t$, uniformly in $w$.
The piecewise-constant state $\bar B_s^h=B_n^h$ on each mesh interval
therefore gives a stepwise Riemann-sum error of order $O(h)$, uniformly in
$t$:
\[
 \sum_{t_{n+1}\le t}h\gamma(t_{n+1},B_n^h)
 -\int_0^t\gamma(s,\bar B_s^h)\,ds=O(h),
\]
which proves \eqref{eq:bracket-ucp}.  The state bracket follows from
isotropy, while the cross bracket vanishes exactly by the GKW
orthogonality
$\E_n[\Delta N_{n+1}^h\Delta B_{n+1}^h]=0$.
Finally, finite support, bounded derivatives and
Lemma~\ref{lem:local-gkw-fclt} give
$|\Delta M_{n+1}^h|+|\Delta B_{n+1}^h|\le C\sqrt h$ uniformly, proving
the vanishing-jump and conditional Lindeberg assertions.
\end{proof}

\begin{lemma}[Uniqueness of the limiting martingale problem]\label{lem:unique-fclt}
Set
\[
 \gamma(t,w)=\Gamma_\mu(D^2u(t,w)),\qquad
 \sigma(t,w)=\sqrt{\gamma(t,w)}.
\]
The martingale problem with generator
\[
 \mathcal L_t\varphi(w,m)
 =\frac12\Delta_w\varphi(w,m)
 +\frac12\gamma(t,w)\partial_{mm}\varphi(w,m)
\]
and initial state $(0,0)$ has a unique law, namely
\[
 \left(W_t,\int_0^t\sigma(s,W_s)\,dW_s^\perp\right)_{0\le t\le T},
\]
where $W^\perp$ is a one-dimensional Brownian motion independent of $W$.
\end{lemma}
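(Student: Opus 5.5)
Existence is handled by exhibiting the displayed pair and checking that it solves the martingale problem. Uniqueness is obtained by reducing any solution to the triangular SDE system $dW=dW$, $dm=\sigma(t,W)\,dW^\perp$, whose law is determined by that of $W$ together with an independent Brownian motion.

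\emph{Existence.} On a space carrying independent Brownian motions $W$ (in $\R^d$) and $W^\perp$ (in $\R$), set $m_t=\int_0^t\sigma(s,W_s)\,dW_s^\perp$. This is well defined because $\gamma$ is bounded: $D^2u$ is bounded under Assumption~\ref{ass:Hnum} and $\Gamma_\mu$ is a quadratic form, so $\gamma\ge0$ and $\sigma\le C$. The pair $(W,m)$ has brackets $\langle W^i,W^j\rangle_t=\delta_{ij}t$, $\langle W,m\rangle\equiv0$ and $\langle m\rangle_t=\int_0^t\gamma(s,W_s)\,ds$. It\^o's formula then shows that
\[
\varphi(W_t,m_t)-\int_0^t\mathcal L_s\varphi(W_s,m_s)\,ds
\]
is a martingale for every $\varphi\in C_c^2$.

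\emph{Uniqueness.} Let $(W,m)$ be any solution of the martingale problem. First, taking $\varphi(w,m)=w_i$ and $\varphi(w,m)=w_iw_j$ (localized), the standard argument shows that $W$ is a continuous martingale with $\langle W^i,W^j\rangle_t=\delta_{ij}t$. By L\'evy's characterization, $W$ is a $d$-dimensional Brownian motion with respect to the filtration of $(W,m)$. Second, taking $\varphi=m$, $m^2$ and $w_im$ shows that $m$ is a continuous martingale with $\langle m\rangle_t=\int_0^t\gamma(s,W_s)\,ds$ and $\langle W,m\rangle\equiv0$.

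Next, enlarge the probability space by an independent Brownian motion $\beta$ and define
\[
W^\perp_t=\int_0^t\sigma(s,W_s)^{-1}\mathbf 1_{\{\sigma>0\}}\,dm_s+\int_0^t\mathbf 1_{\{\sigma(s,W_s)=0\}}\,d\beta_s.
\]
This is a continuous local martingale with $\langle W^\perp\rangle_t=t$. Its bracket with $W$ is $\langle W,W^\perp\rangle\equiv0$, since $\langle W,m\rangle=0$ and $\beta$ is independent of $(W,m)$. By L\'evy's theorem, $(W,W^\perp)$ is a $(d+1)$-dimensional Brownian motion; in particular $W^\perp$ is independent of $W$. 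Moreover $m=\int_0^\cdot\sigma(s,W_s)\,dW^\perp_s$, because the difference is a continuous local martingale with zero bracket: on $\{\sigma=0\}$ the increment $dm$ has zero bracket and vanishes.

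With this representation the joint law is identified. Conditionally on $W$, the process $m$ is a Wiener integral with deterministic integrand $\sigma(\cdot,W_\cdot)$ against the independent Brownian motion $W^\perp$. Hence $m$ is conditionally centered Gaussian with covariance $\int_0^{s\wedge t}\gamma(r,W_r)\,dr$. This conditional description fixes the law of $(W,m)$ on path space. (Alternatively, pathwise uniqueness and Yamada--Watanabe apply, since the coefficient of $m$ does not depend on $m$.)

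\emph{Main obstacle.} The delicate step is the degeneracy of $\sigma$, which can vanish: $\gamma=0$ wherever $D^2u$ lies in the kernel of $\Gamma_\mu$, e.g.\ $D^2u=0$. This rules out inverting $\sigma$ directly and forces the enlargement by $\beta$. I would also verify that $\sigma=\sqrt\gamma$ need not be Lipschitz, and that this is harmless: uniqueness never uses regularity of $\sigma$ in $w$, because the $W$-marginal is Brownian independently of $\sigma$ and $m$ is an explicit functional of $(W,W^\perp)$. Only measurability and boundedness of $\sigma$ enter.
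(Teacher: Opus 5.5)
Your proof is correct, but it reaches uniqueness by a different route from the paper.

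\textbf{The paper's route.} It rests on the observation that $\sqrt{\Gamma_\mu(A)}=\|\mathcal R_A\|_{L^2(\mu)}$ with $A\mapsto\mathcal R_A$ linear. Hence $|\sqrt{\Gamma_\mu(A)}-\sqrt{\Gamma_\mu(B)}|\le\|\mathcal R_{A-B}\|_{L^2(\mu)}\le C|A-B|$. With bounded $D^3u$, this makes $\sigma(t,\cdot)$ bounded and Lipschitz. The triangular SDE $dW=d\widetilde W$, $dM=\sigma(t,W_t)\,dW^\perp_t$ then has pathwise uniqueness. Well-posedness of the martingale problem is imported from the SDE/martingale-problem correspondence in Ethier--Kurtz, Chapter~5.

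\textbf{Your route.} You carry out that correspondence by hand for this particular generator. L\'evy's theorem makes $W$ Brownian. The test functions $m$, $m^2$, $w_im$ give $\langle m\rangle$ and $\langle W,m\rangle=0$. The auxiliary Brownian motion $\beta$ on $\{\sigma(s,W_s)=0\}$ produces $W^\perp$; this enlargement is exactly the step hidden inside the cited representation theorem for degenerate diffusion matrices.

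\textbf{Comparison.} Your version is self-contained and more general: it needs only boundedness and measurability of $\gamma$. For this triangular system pathwise uniqueness is automatic, since $W$ is the driver and $m$ is an explicit integral against $W^\perp$. The paper's version is shorter, at the price of a citation and a regularity check.

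\textbf{A correction.} Your remark that $\sigma=\sqrt\gamma$ need not be Lipschitz is wrong in this setting. Here $\sigma(t,w)=\|\mathcal R_{D^2u(t,w)}\|_{L^2(\mu)}$ is a seminorm evaluated along the Lipschitz field $D^2u(t,\cdot)$, hence Lipschitz in $w$; this is precisely what the paper uses. It does not affect your argument, since you never rely on regularity of $\sigma$.

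\textbf{The final identification.} It is cleaner to say that $\int_0^\cdot\sigma(s,W_s)\,dW^\perp_s$ is a measurable functional of the Brownian pair $(W,W^\perp)$, whose joint law is fixed. Conditioning on the whole path of $W$ and reinterpreting the It\^o integral as a Wiener integral under a regular conditional law is more delicate; your Yamada--Watanabe parenthetical already gives the cleaner route.
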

\begin{proof}
Because $A\mapsto\mathcal R_A$ is linear,
\[
 |\sqrt{\Gamma_\mu(A)}-\sqrt{\Gamma_\mu(B)}|
 \le\|\mathcal R_{A-B}\|_2\le C|A-B|.
\]
Assumption~\ref{ass:Hnum} bounds $D^3u$, hence $D^2u(t,\cdot)$ is
uniformly Lipschitz and $\sigma(t,\cdot)$ is bounded and uniformly
Lipschitz.  Therefore the triangular SDE
\[
 dW_t=d\widetilde W_t,\qquad
 dM_t=\sigma(t,W_t)\,dW_t^\perp,
\]
with independent Brownian drivers $(\widetilde W,W^\perp)$ has pathwise
uniqueness and a unique weak law.  Its diffusion matrix is
$\operatorname{diag}(I_d,\gamma(t,w))$, and the associated martingale
problem is therefore well posed; see, for example, Ethier and
Kurtz~\cite[Chapter~5]{EthierKurtz1986} for the standard correspondence
between weak uniqueness for SDEs and uniqueness of the associated
martingale problem.
\end{proof}

\begin{theorem}[Joint functional limit of the orthogonal martingale]\label{thm:N-fclt}
Under Assumptions~\ref{ass:increment} and~\ref{ass:Hnum},
\begin{equation}\label{eq:N-fclt}
 \left(\bar B^h,\frac{\bar N^h}{\sqrt h}\right)
 \Longrightarrow
 \left(W,\int_0^.\sqrt{\Gamma_\mu(D^2u(s,W_s))}\,dW_s^\perp\right)
\end{equation}
in $D([0,T];\R^{d+1})$ with the $J_1$ topology.
\end{theorem}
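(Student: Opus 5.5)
We would prove \eqref{eq:N-fclt} by the standard martingale functional central limit theorem for triangular arrays of square-integrable martingales, in the form that identifies the limit through its predictable characteristics and a well-posed martingale problem (for instance Jacod--Shiryaev, Theorem~IX.3.39 or its VIII.3 analogues, or Ethier and Kurtz, Chapter~7). The pair $X^h:=(\bar B^h,M^h)$ is a $D([0,T];\R^{d+1})$-valued martingale with respect to the step filtration. By Lemma~\ref{lem:char-fclt} its jumps are uniformly bounded by $C\sqrt h$ and hence vanish deterministically. The theorem then reduces to three ingredients: tightness, convergence of the predictable quadratic covariation to a functional of the limiting path, and uniqueness of the limiting martingale problem, which is Lemma~\ref{lem:unique-fclt}.

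\emph{Tightness.} The brackets satisfy $\langle\bar B^{h,i}\rangle_t\le t$ and $\langle M^h\rangle_t\le C t$, the latter because $\E_n[(\Delta N^h_{n+1})^2]\le Ch^2$ by Lemma~\ref{lem:local-gkw-fclt} and $\gamma$ is bounded. The Aldous criterion, applied to the brackets, gives C-tightness of each bracket sequence. Together with the vanishing jumps, Rebolledo's theorem then gives tightness of $X^h$ in $J_1$, and every limit point has continuous paths.

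\emph{Identification of limit points.} Take a subsequence with $X^h\Rightarrow X=(W,M)$. By Skorokhod representation we may assume a.s.\ convergence, which is locally uniform since the limit is continuous. Lemma~\ref{lem:char-fclt} gives
\[
\langle\bar B^{h,i},\bar B^{h,j}\rangle_t\to\delta_{ij}t,\qquad \langle\bar B^h,M^h\rangle\equiv0,
\]
and $\langle M^h\rangle_t-\int_0^t\gamma(s,\bar B^h_s)\,ds\to0$ uniformly in probability. Since $\gamma$ is bounded and continuous in $w$ (indeed Lipschitz, as in the proof of Lemma~\ref{lem:unique-fclt}), the uniform convergence $\bar B^h\to W$ gives $\int_0^t\gamma(s,\bar B^h_s)\,ds\to\int_0^t\gamma(s,W_s)\,ds$ uniformly. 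For $\varphi\in C_b^2$ with bounded derivatives, a second-order Taylor expansion of $\varphi(X^h_{t_{n+1}})-\varphi(X^h_{t_n})$ shows that
\[
\varphi(X^h_t)-\sum_{t_{n+1}\le t}\E_n[\cdots]
\]
is a martingale. The compensator equals $\int_0^t\mathcal L_s\varphi(\bar B^h_s,M^h_s)\,ds+o_P(1)$ uniformly. In this expansion the drift terms vanish by the martingale property, the second-order terms are given by the conditional covariances above, and the third-order remainders are $O(h^{3/2})$ per step, hence $O(\sqrt h)$ in total. The increments are bounded, so the martingales are uniformly integrable. Passing to the limit in $\E[(\Phi_t-\Phi_s)\,H(X_{r_1},\dots,X_{r_k})]=0$ for bounded continuous $H$ shows that $X$ solves the martingale problem for $\mathcal L_t$ started at $(0,0)$.

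\emph{Conclusion and main obstacle.} By Lemma~\ref{lem:unique-fclt} every limit point therefore has the stated law, and tightness gives convergence of the full sequence. The delicate step is the passage to the limit in the bracket for $M^h$. There the predictable quadratic variation depends on the path itself through $\gamma(s,\bar B^h_s)$ rather than converging to a deterministic function, so the standard ``deterministic bracket'' version of the martingale CLT does not apply. This is why we route the argument through the martingale problem, using the continuity of $\gamma$ and Skorokhod coupling. An alternative would be stable convergence via Jacod's theorem with a random limiting bracket, but the martingale-problem route uses only what is already established.
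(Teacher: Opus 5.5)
Your proposal is correct and follows essentially the same route as the paper. Both arguments rest on the same ingredients: vanishing drift, convergence of the predictable brackets from Lemma~\ref{lem:char-fclt}, vanishing jumps, and well-posedness of the limiting martingale problem from Lemma~\ref{lem:unique-fclt}. The only difference is that the paper obtains tightness and identification of limit points by citing Ethier--Kurtz's diffusion-approximation theorem (Chapter~7, Theorem~4.1), applied to the space--time process $(t,\bar B^h,M^h)$ so that the generator is time-homogeneous, whereas you verify these two steps by hand, via Rebolledo's criterion and a Skorokhod/Taylor argument with the time-dependent generator $\mathcal L_t$.
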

\begin{proof}
Set $M^h=\bar N^h/\sqrt h$ and consider the space--time process
\[
 \mathcal X_t^h=(t,\bar B_t^h,M_t^h).
\]
The drift characteristics of the last $d+1$ coordinates vanish.  By
Lemma~\ref{lem:char-fclt}, their predictable covariance characteristics
converge uniformly on $[0,T]$ in probability to those associated with the
space--time generator
\[
 \mathcal A\varphi(t,w,m)
 =\partial_t\varphi(t,w,m)
 +\frac12\Delta_w\varphi(t,w,m)
 +\frac12\Gamma_\mu(D^2u(t,w))\partial_{mm}\varphi(t,w,m).
\]
Indeed, the state bracket differs from $tI_d$ by at most $hI_d$, the
state--orthogonal cross bracket is identically zero, and
\eqref{eq:bracket-ucp} gives the remaining component.  The maximal jump of
$\mathcal X^h$ tends to zero, so the conditional Lindeberg condition also
holds.

The diffusion-approximation theorem for martingale arrays, formulated in
terms of convergence of predictable local characteristics and the
Lindeberg condition (see Ethier and Kurtz~\cite[Chapter~7, Theorem~4.1]{EthierKurtz1986},
together with the martingale-problem convergence framework of
Chapter~4), therefore gives tightness and shows that every subsequential
limit solves the martingale problem for $\mathcal A$.  Equivalently, this
is the time-homogeneous space--time formulation of the martingale problem
for $\mathcal L_t$ in Lemma~\ref{lem:unique-fclt}.  That lemma gives
well-posedness and identifies the unique law as
\[
 \left(W,\int_0^.\sqrt{\Gamma_\mu(D^2u(s,W_s))}\,dW_s^\perp\right).
\]
Hence the full sequence converges as claimed.  In particular, the
vanishing drift, convergence of predictable covariance characteristics,
conditional Lindeberg condition, and uniqueness of the limiting martingale
problem provide the required diffusion-approximation ingredients.
\end{proof}

\begin{remark}[Classical discretization-error limits and finite-state geometry]
Independent Gaussian innovations and mixed-normal limits are familiar in
classical discretization-error asymptotics for stochastic integrals and
hedging; see Rootz\'en~\cite{Rootzen1980}, Jacod and
Protter~\cite{JacodProtter1998}, Hayashi and
Mykland~\cite{HayashiMykland2005}, and Fukasawa~\cite{Fukasawa2014}.
The new feature here is the dependence of the limiting conditional variance
on the \emph{finite-state one-step law} through the quadratic GKW defect
$\Gamma_\mu$.  Theorem~\ref{thm:universal-defect} and
Corollary~\ref{cor:defect-extremizers} give a sharp worst-direction lower bound
on the trace-free quadratic sector and characterize its finite-state
extremizers.  For comparison, a standard Gaussian increment has
$\Gamma_G(A)=\tfrac12|A|_F^2$ for trace-free $A$, whereas the sharp
finite-state worst-direction bound is $d/[2(d+2)]$.
This comparison concerns the defect functional, not a claim that a
finite-state increment is globally preferable to a Gaussian one.
The present result is formulated as joint weak convergence with the state
process; it retains the Brownian-path dependence of the random limiting
variance without requiring a common-space embedding of the approximations.
\end{remark}

\begin{corollary}[Terminal mixed-normal limit]\label{cor:N-mixed}
With
\[
 V_T=\int_0^T\Gamma_\mu(D^2u(s,W_s))\,ds,
\]
we have
\[
 \frac{N_N^h}{\sqrt h}\Longrightarrow\sqrt{V_T}\,G,
\]
where, on an extension of the probability space if necessary,
$G\sim N(0,1)$ is independent of $W$.
\end{corollary}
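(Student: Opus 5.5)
The plan is to read off the terminal marginal from the joint functional limit in Theorem~\ref{thm:N-fclt}, and then identify the law of the stochastic integral by a conditional characteristic-function computation.

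\emph{Step 1: pass to the terminal coordinate.} The step interpolation satisfies $\bar N^h_T=N_N^h$, because $t_N=T$ and the interpolation is constant on $[t_n,t_{n+1})$ with the value at $T$ equal to $N_N^h$. The limit process in \eqref{eq:N-fclt} is continuous, so it has no fixed jump at $T$. Hence the projection $x\mapsto x(T)$ is continuous at almost every limit path in the $J_1$ topology; the endpoint $T$ is always a continuity point of evaluation for càdlàg paths on $[0,T]$ anyway. The continuous mapping theorem then gives
\[
 \frac{N_N^h}{\sqrt h}\Longrightarrow \int_0^T\sigma(s,W_s)\,dW_s^\perp,\qquad \sigma=\sqrt{\Gamma_\mu(D^2u)},
\]
with $W^\perp$ independent of $W$, as in Lemma~\ref{lem:unique-fclt}.

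\emph{Step 2: identify the law of the limit.} Condition on $\mathcal F^W_T$. The integrand $\sigma(s,W_s)$ is bounded and measurable with respect to $W$, and $W^\perp$ is an independent Brownian motion. Consequently, conditionally on $W$, the integral $\int_0^T\sigma(s,W_s)\,dW^\perp_s$ is centered Gaussian with variance $V_T=\int_0^T\sigma^2(s,W_s)\,ds$. To make this rigorous, fix $\theta$ and apply Itô's formula to $\exp(i\theta M_t+\tfrac12\theta^2\int_0^t\sigma^2\,ds)$. This shows the process is a bounded martingale in the filtration $\mathcal F^W_T\vee\mathcal F^{W^\perp}_t$ (initially enlarged by all of $W$); $W^\perp$ remains a Brownian motion there by independence. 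It follows that
\[
 \E[e^{i\theta M_T}\mid W]=e^{-\theta^2V_T/2}.
\]
On the other hand, for $G\sim N(0,1)$ independent of $W$, the variable $\sqrt{V_T}\,G$ has the same conditional characteristic function, so the two laws coincide. This identification is the only nontrivial point. The main care needed is to justify that $W^\perp$ remains Brownian after the initial enlargement by $W$, which is immediate from independence.

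Step 2 in fact gives more than the stated result: joint convergence of $(\bar B^h, N_N^h/\sqrt h)$ to $(W,\sqrt{V_T}G)$. In that form $V_T$ is realized as a functional of the limiting state path, which is what is meant by "an extension of the probability space."
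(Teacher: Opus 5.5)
Your argument is correct and follows the route the paper intends. The paper states the corollary without a separate proof, as the time-$T$ marginal of Theorem~\ref{thm:N-fclt}, with the limit $\int_0^T\sigma(s,W_s)\,dW_s^\perp$ being conditionally $N(0,V_T)$ given $W$; your continuity of evaluation at $T$ in $J_1$ and your conditional characteristic-function computation supply exactly the details the paper leaves implicit. One point you leave unstated, harmless here because $s\mapsto\sigma(s,W_s)$ is continuous and adapted, is that the stochastic integral is the same in the original and in the initially enlarged filtration, for instance as a limit of left-point Riemann sums.
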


For the centrally symmetric simplex, if $I$ is uniform on
$\{0,\ldots,d\}$, Proposition~\ref{prop:quad-defect} reduces to
\begin{equation}\label{eq:Gamma-sym}
 \Gamma_{\rm sym}(A)
 =\frac14\operatorname{Var}_{I}
 (\widehat v_I^\top A\widehat v_I).
\end{equation}
Indeed,
$M_4[A,A]=(d+1)^{-1}\sum_i(\widehat v_i^\top A\widehat v_i)^2$ and the
simplex covariance identity makes their mean equal to $\operatorname{tr}A$.
In particular,
\begin{equation}\label{eq:Gamma-rank-one}
 \Gamma_{\rm sym}(\widehat v_0\widehat v_0^\top)
 =\frac{d(d-1)^2}{4}.
\end{equation}
Thus the limiting orthogonal noise measures the dispersion of directional
curvatures across the simplex directions.  Formula~\eqref{eq:Gamma-rank-one}
will recover the exact Fourier variance constant below.

\section{Sharpness}

We first show that the difference between the minimal-simplex and symmetric-simplex rules
is visible already for the driver-free heat equation.

\subsection{Minimal simplex: a genuine square-root term}

Fix \(a=\widehat v_0\).  For the minimal simplex law,
\[
 X:=a^\top\xi^{\rm min}
 =\begin{cases}
 d,&\text{with probability }1/(d+1),\\
 -1,&\text{with probability }d/(d+1).
 \end{cases}
\]
Consequently
\[
 \E[X^2]=d,\qquad
 \E[X^3]=d(d-1),\qquad
 \E[X^4]=d(d^2-d+1).
\]
For terminal datum
\[
 g_\theta(x)=\sin(a^\top x+\theta),
\]
the Brownian heat solution at time zero is
\[
 u(0,x)=e^{-dT/2}\sin(a^\top x+\theta).
\]
The minimal-simplex one-step multiplier is
\[
 m_{\rm min}(h)
 =\frac{e^{id\sqrt h}+d e^{-i\sqrt h}}{d+1}.
\]
Its logarithm satisfies
\[
 \log m_{\rm min}(h)
 =-\frac d2h
 -i\frac{d(d-1)}6h^{3/2}
 +c_4h^2+O(h^{5/2}),
\]
where
\[
 c_4=\frac{d(d^2-4d+1)}{24}.
\]
Thus, with \(N=T/h\),
\[
 m_{\rm min}(h)^N
 =e^{-dT/2}\left[
 1-i\frac{d(d-1)T}{6}\sqrt h+O(h)\right].
\]
At \(x=0\),
\begin{equation}\label{eq:minsharp-full}
 U_0^{\rm min}(0)-u(0,0)
 =-e^{-dT/2}\frac{d(d-1)T}{6}
   \cos\theta\,\sqrt h+O(h).
\end{equation}
For \(d\ge2\) and \(\cos\theta\ne0\), the error is of exact order
\(\sqrt h\).

\subsection{Symmetric simplex: the odd term disappears}

For the symmetric law,
\[
 m_{\rm sym}(h)
 =\frac{\cos(d\sqrt h)+d\cos(\sqrt h)}{d+1},
\]
so
\[
 \log m_{\rm sym}(h)
 =-\frac d2h+c_4h^2+O(h^3).
\]
Hence
\begin{equation}\label{eq:symsharp-full}
 U_0^{\rm sym}(0)-u(0,0)
 =e^{-dT/2}c_4T\sin\theta\,h+O(h^2).
\end{equation}
Since $d^2-4d+1$ has no integer root, $c_4\ne0$ for integer $d\ge2$.
Thus whenever \(\sin\theta\ne0\), the \(O(h)\) rate is sharp.  The comparison
\eqref{eq:minsharp-full}--\eqref{eq:symsharp-full} isolates the exact
effect of central symmetry.

\subsection{Sharpness of the orthogonal martingale scale}

The \(O(\sqrt h)\) maximal \(L^2\) scale of \(N^h\) is already attained by
the Fourier benchmark.

\begin{proposition}[Exact orthogonal-martingale asymptotic]\label{prop:Nexact}
For the symmetric-simplex Fourier benchmark
\(g_\theta(x)=\sin(\hat v_0^\top x+\theta)\), \(d\ge2\),
\[
 \lim_{h\downarrow0}\frac1h\E[|N_N^h|^2]
 =
 C_{d,T,\theta},
\]
where
\[
 C_{d,T,\theta}
 =
 \frac{(d-1)^2}{8}
 (1-e^{-dT})
 (1-e^{-dT}\cos(2\theta))
 >0.
\]
Consequently
\[
 \|N_N^h\|_{L^2}
 =
 \sqrt{C_{d,T,\theta}}\,\sqrt h+o(\sqrt h).
\]
\end{proposition}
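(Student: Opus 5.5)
The plan is to compute everything exactly, exploiting the fact that the driver is zero ($f\equiv0$) and the terminal datum is a single Fourier mode. Write $a=\widehat v_0$ and $X=a^\top\xi$. Under the symmetric law \eqref{eq:sym-law}, $X$ takes the values $\pm d$ with total probability $1/(d+1)$ and $\pm1$ with total probability $d/(d+1)$, symmetrically. Hence $P_h$ acts on $e^{i(a^\top x+\theta)}$ by the real multiplier $m:=m_{\rm sym}(h)$, and backward induction in \eqref{eq:scheme} gives the closed form
\[
 U_n(x)=m^{N-n}\sin(a^\top x+\theta).
\]

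Next I would compute the one-step GKW residual exactly. At $x=B_n^h$,
\[
 U_{n+1}(x+\sqrt h\xi)=m^{N-n-1}\bigl[\sin(a^\top x+\theta)\cos(\sqrt hX)+\cos(a^\top x+\theta)\sin(\sqrt hX)\bigr].
\]
The two pieces project differently under $\Pi_\perp$.
\begin{itemize}
\item $\cos(\sqrt hX)$ is an even function of $\xi$, hence orthogonal to the linear sector $\{b^\top\xi\}$. Its projection is therefore just $\cos(\sqrt hX)-m$, which equals $-\tfrac h2(X^2-d)+O(h^2)$ uniformly on the finite support.
\item $\sin(\sqrt hX)=\sqrt hX+O(h^{3/2})$ uniformly, and $\sqrt hX$ lies in the linear sector. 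So its projection is $O(h^{3/2})$ pointwise.
\item The even and odd pieces are orthogonal, so there are no cross terms.
\end{itemize}
From the moments $\E[X^2]=d$ and $\E[X^4]=d(d^2-d+1)$ we get $\operatorname{Var}(X^2)=d(d-1)^2$. This is consistent with \eqref{eq:Gamma-rank-one}. It yields
\[
 \E_n[(\Delta N^h_{n+1})^2]=m^{2(N-n-1)}\Bigl[\tfrac{h^2}{4}d(d-1)^2\sin^2(a^\top B_n^h+\theta)+O(h^3)\Bigr],
\]
with the $O(h^3)$ uniform in $n$ and the path.

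Then I would sum over time using orthogonality of martingale increments, $\E|N_N^h|^2=\sum_n\E[(\Delta N^h_{n+1})^2]$. To evaluate each term, write $\sin^2=\tfrac12(1-\cos(2(a^\top B_n^h+\theta)))$. Using the same multiplier argument for the mode $2a$ gives
\[
 \E[\cos(2a^\top B_n^h+2\theta)]=m_2^n\cos2\theta,\qquad m_2:=\E[\cos(2\sqrt hX)]=e^{-2dh+O(h^2)}.
\]
Also $m^{2(N-n-1)}=e^{-d(T-t_n)}(1+O(h))$ uniformly, by $\log m_{\rm sym}=-\tfrac d2h+O(h^2)$.

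The remaining step is the Riemann sum. Dividing by $h$ gives
\[
 \frac{d(d-1)^2}{8}\sum_n h\,e^{-d(T-t_n)}\bigl(1-e^{-2dt_n}\cos2\theta\bigr)+O(h).
\]
As $h\downarrow0$ this converges to
\[
 \frac{d(d-1)^2}{8}\cdot\frac1d\Bigl[(1-e^{-dT})-e^{-dT}(1-e^{-dT})\cos2\theta\Bigr],
\]
which equals $C_{d,T,\theta}$. Positivity follows from $d\ge2$, $T>0$ and $e^{-dT}\cos2\theta<1$. The $L^2$-norm statement then follows by taking square roots.

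The only delicate point is bookkeeping: the $O(h^3)$ one-step errors and the $(1+O(h))$ multiplier errors must be uniform in $n$, so that after summing $T/h$ terms they contribute $o(h)$. The exact multiplier formulas make this routine. As a consistency check, the summand coincides with $h^2\Gamma_{\rm sym}(D^2u)$ from Lemma~\ref{lem:local-gkw-fclt}.
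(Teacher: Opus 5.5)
Your proof is correct and follows essentially the same route as the paper: it uses the leading GKW residual $-\tfrac h2\,m^{N-n-1}\sin(\Phi_n)(X^2-d)$, summation of conditional variances, the exact phase identity $\E[\sin^2\Phi_n]=\tfrac12(1-r_h^n\cos2\theta)$ with $r_h=\E[\cos(2\sqrt h X)]$, uniform multiplier asymptotics, and a Riemann sum. Your even/odd parity split justifies explicitly the one-step residual expansion that the paper only asserts, and because it removes the cross term, your accumulated remainder is $O(h)$ rather than the paper's $O(\sqrt h)$.
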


\begin{proof}
Put \(a=\hat v_0\) and \(X=a^\top\xi\).  Under the symmetric-simplex law,
\(X=\pm d\) with total probability \(1/(d+1)\) and \(X=\pm1\) with total
probability \(d/(d+1)\).  Thus
\[
 Q:=X^2-d
\]
is centered and, since the third moment vanishes, orthogonal to the full
linear span of \(\xi\).  Moreover
\[
 \E[Q^2]=d(d-1)^2.
\]
Writing
\[
 m_h=\frac{\cos(d\sqrt h)+d\cos(\sqrt h)}{d+1},\quad
 \Lambda_{n+1}=m_h^{N-n-1},\quad
 \Phi_n=a^\top B_n^h+\theta,
\]
the one-step GKW residual satisfies
\[
 \Delta N_{n+1}^h
 =
 -\frac h2\Lambda_{n+1}\sin(\Phi_n)Q+O_{L^2}(h^{3/2}).
\]
Summing the conditional variances gives
\[
 \frac1h\E[|N_N^h|^2]
 =
 \frac{d(d-1)^2}{4}
 \sum_{n=0}^{N-1}
 h\Lambda_{n+1}^2\E[\sin^2(\Phi_n)]
 +O(\sqrt h).
\]
We spell out the passage from the random phases to the deterministic
Riemann sum.  Since the symmetric-simplex law is centrally symmetric, its
one-step characteristic function in the direction $a$ is real.  At twice
the Fourier frequency, put
\[
 r_h:=\E[e^{2i\sqrt h X}]
 =\frac{\cos(2d\sqrt h)+d\cos(2\sqrt h)}{d+1}.
\]
By independence of the increments of $B^h$,
\[
 \E[e^{2i\Phi_n}]
 =e^{2i\theta}r_h^n,
\]
and hence
\begin{equation}\label{eq:sin2-phase-expectation}
 \E[\sin^2(\Phi_n)]
 =\frac12\{1-r_h^n\cos(2\theta)\}.
\end{equation}
This identity is exact for every $h$ and $n$; in particular, the random
quantity $\sin^2(\Phi_n)$ is replaced by a deterministic expression before
any Riemann-sum argument is used.

Taylor expansion gives
\[
 m_h=1-\frac d2h+O(h^2),\qquad
 r_h=1-2dh+O(h^2).
\]
Consequently,
\[
 \log m_h=-\frac d2h+O(h^2),\qquad
 \log r_h=-2dh+O(h^2).
\]
Since $N=T/h$ and $0\le n\le N$, the accumulated remainders are uniform in
$n$.  With $t_n=nh$ this yields
\begin{equation}\label{eq:uniform-multiplier-limits}
 \sup_{0\le n<N}
 \left|\Lambda_{n+1}^2-e^{-d(T-t_{n+1})}\right|=O(h),
 \qquad
 \sup_{0\le n\le N}
 \left|r_h^n-e^{-2dt_n}\right|=O(h).
\end{equation}
(The harmless shift from $t_{n+1}$ to $t_n$ also changes the first term by
only $O(h)$, uniformly in $n$.)  Combining
\eqref{eq:sin2-phase-expectation} and
\eqref{eq:uniform-multiplier-limits},
\[
 \sup_{0\le n<N}\left|
 \Lambda_{n+1}^2\E[\sin^2(\Phi_n)]
 -\frac12e^{-d(T-t_n)}
   \{1-e^{-2dt_n}\cos(2\theta)\}
 \right|=O(h).
\]
Therefore the triangular array in the preceding display differs by $o(1)$
from an ordinary deterministic Riemann sum, and
\begin{align*}
 &\frac{d(d-1)^2}{4}
 \sum_{n=0}^{N-1}h\Lambda_{n+1}^2\E[\sin^2(\Phi_n)]\\
 &\qquad\longrightarrow
 \frac{d(d-1)^2}{8}
 \int_0^T e^{-d(T-t)}
 \{1-e^{-2dt}\cos(2\theta)\}\,dt.
\end{align*}
Finally,
\[
 \int_0^T e^{-d(T-t)}\,dt=\frac{1-e^{-dT}}d,
\qquad
 \int_0^T e^{-d(T-t)}e^{-2dt}\,dt
 =\frac{e^{-dT}(1-e^{-dT})}{d},
\]
so the last display equals
\[
 \frac{(d-1)^2}{8}(1-e^{-dT})
 \{1-e^{-dT}\cos(2\theta)\}
 =C_{d,T,\theta}.
\]
\end{proof}

The same constant is also recovered from the functional limit.  Indeed,
Lemma~\ref{lem:local-gkw-fclt}, finite support and the discrete BDG
inequality give
\[
 \sup_h\E\!\left[\left|\frac{N_N^h}{\sqrt h}\right|^4\right]<\infty,
\]
so the squared terminal variables are uniformly integrable.  Hence
Corollary~\ref{cor:N-mixed} yields
\[
 \frac1h\E[|N_N^h|^2]
 \longrightarrow
 \E[\int_0^T\Gamma_{\rm sym}(D^2u(s,W_s))\,ds].
\]
Using \eqref{eq:Gamma-rank-one} and the Fourier Hessian
$D^2u(s,x)=-e^{-d(T-s)/2}\sin(a^\top x+\theta)aa^\top$ reproduces
$C_{d,T,\theta}$ above.  Thus the direct finite-step calculation provides
an independent check of the general functional limit.

The proposition makes the mechanism explicit.  The quadratic Taylor term
lies in the GKW complement at order \(h\); the conditional variances are
therefore of order \(h^2\), and their sum over \(T/h\) steps is of order
\(h\).  This is compatible with first-order accuracy of the value field:
the orthogonal martingale measures the one-step component not representable
by the \(d\)-dimensional increment, rather than a bias in the Markov value
recursion.

\section{Predictable representation versus third-moment cancellation}

The preceding tradeoff is not specific to the regular simplex.

Exact scalar one-step PRP means
\[
 L_0^2(\xi)
 =\{a^\top\xi:a\in\R^d\},
\]
where \(L_0^2(\xi)\) is the centered scalar one-step function space.
By Proposition~\ref{prop:minprp}, this already forces minimal support
\(m=d+1\).  The next theorem shows that this algebraic completeness is
incompatible with the moment cancellation needed to remove the generic
square-root weak term.

\begin{remark}[Relation with discrete structure equations]\label{rem:obtuse}
For a centered normalized random variable taking exactly \(d+1\) values,
the functions \(1,\xi_1,\ldots,\xi_d\) form an orthonormal basis of the
one-step \(L^2\) space.  Hence each product \(\xi_i\xi_j\) has an expansion
\[
 \xi_i\xi_j=\delta_{ij}+\sum_{k=1}^d S^{ij}_k\xi_k,
\]
the discrete structure equation of the obtuse-random-variable literature.
Moreover
\[
 S^{ij}_k=\E[\xi_i\xi_j\xi_k].
\]
Thus Theorem~\ref{thm:prp} can also be read as the elementary statement that
the structure tensor of a nondegenerate \(d\)-dimensional minimal-state law
cannot vanish when \(d\ge2\).  We include a direct proof because it is short and gives exactly the form
needed for the weak-error argument.
\end{remark}

\begin{lemma}[Quadratic structure coefficients under exact PRP]
\label{lem:quadratic-structure}
Let \(\xi\) be centered and isotropic and assume exact scalar one-step PRP.
For \(1\le i,j\le d\), define the centered quadratic variable
\[
 Q_{ij}:=\xi_i\xi_j-\delta_{ij}.
\]
Then there exists a unique vector \(c^{ij}\in\R^d\) such that
\begin{equation}\label{eq:qij-prp}
 Q_{ij}=(c^{ij})^\top\xi
 \qquad\text{in }L^2.
\end{equation}
Its coordinates are exactly the third-order moments:
\begin{equation}\label{eq:qij-coeff}
 c^{ij}_k=\E[\xi_i\xi_j\xi_k],\qquad k=1,\ldots,d.
\end{equation}
Equivalently,
\[
 \xi_i\xi_j=\delta_{ij}+\sum_{k=1}^d\E[\xi_i\xi_j\xi_k]\,\xi_k
 \qquad\text{in }L^2.
\]
\end{lemma}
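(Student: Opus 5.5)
The argument has three steps: show that each \(Q_{ij}\) is a centered one-step scalar variable, apply exact PRP to write it as a linear function of \(\xi\), and identify the coefficients by projecting onto the coordinates using isotropy.

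\emph{Membership in \(L_0^2(\xi)\).} By isotropy,
\[
 \E[Q_{ij}]=\E[\xi_i\xi_j]-\delta_{ij}=0 .
\]
Under Assumption~\ref{ass:increment} the support is finite. More generally, exact PRP already forces \(m=d+1\) atoms by Proposition~\ref{prop:minprp}. Either way \(Q_{ij}\) is square integrable, so \(Q_{ij}\in L_0^2(\xi)\).

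\emph{Existence.} Exact scalar one-step PRP says that \(L_0^2(\xi)=\{a^\top\xi:a\in\R^d\}\). Hence there is some \(c^{ij}\in\R^d\) with \(Q_{ij}=(c^{ij})^\top\xi\) in \(L^2\).

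\emph{Uniqueness.} Suppose \(a^\top\xi=0\) in \(L^2\). Then
\[
 0=\E[(a^\top\xi)^2]=a^\top\E[\xi\xi^\top]a=|a|^2,
\]
so \(a=0\). Equivalently, the coordinates \(\xi_1,\dots,\xi_d\) are orthonormal in \(L^2\), which was already used in the proof of Proposition~\ref{prop:minprp}.

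\emph{Identification of the coefficients.} Multiply \eqref{eq:qij-prp} by \(\xi_k\) and take expectations. Isotropy gives
\[
 \E[Q_{ij}\xi_k]=\sum_l c^{ij}_l\E[\xi_l\xi_k]=c^{ij}_k .
\]
On the other hand, centering gives
\[
 \E[Q_{ij}\xi_k]=\E[\xi_i\xi_j\xi_k]-\delta_{ij}\E[\xi_k]=\E[\xi_i\xi_j\xi_k].
\]
This proves \eqref{eq:qij-coeff}. The equivalent displayed form is obtained by adding \(\delta_{ij}\) back to both sides.

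There is no serious obstacle. The only point needing care is that \(Q_{ij}\) genuinely belongs to the centered space \(L_0^2(\xi)\), which is what allows the PRP hypothesis to be applied, and that equalities are understood \(\mu\)-a.s., that is, on the atoms. Otherwise the argument is the orthonormal-basis expansion of Remark~\ref{rem:obtuse}, written out directly.
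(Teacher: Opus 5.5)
Your proof is correct and follows the paper's argument essentially step by step: $Q_{ij}$ is centered by isotropy, exact PRP gives existence, uniqueness follows from $\E[(a^\top\xi)^2]=|a|^2$, and testing against $\xi_k$ identifies $c^{ij}_k=\E[\xi_i\xi_j\xi_k]$. Your extra square-integrability check is a harmless addition, but do not cite Assumption~\ref{ass:increment} for finite support, since that assumption also imposes vanishing third moments, which this lemma does not assume; use instead the finite-support hypothesis of Theorem~\ref{thm:prp}, or the fact that exact PRP makes the one-step $L^2$ space $(d+1)$-dimensional.
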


\begin{proof}
Because \(\E[\xi_i\xi_j]=\delta_{ij}\), the variable \(Q_{ij}\) is centered.
Exact scalar PRP therefore gives \eqref{eq:qij-prp}.  The representing
coefficient is unique: if \(c^\top\xi=0\) in \(L^2\), then isotropy gives
\[
 0=\E[(c^\top\xi)^2]=c^\top\E[\xi\xi^\top]c=|c|^2,
\]
hence \(c=0\).

To identify the coefficient, multiply \eqref{eq:qij-prp} by \(\xi_k\) and
take expectations.  Using centeredness and isotropy,
\[
 \E[Q_{ij}\xi_k]
 =\E[\xi_i\xi_j\xi_k]-\delta_{ij}\E[\xi_k]
 =\E[\xi_i\xi_j\xi_k],
\]
whereas
\[
 \E[((c^{ij})^\top\xi)\xi_k]
 =\sum_{\ell=1}^d c^{ij}_\ell\E[\xi_\ell\xi_k]
 =c^{ij}_k.
\]
This proves \eqref{eq:qij-coeff}.
\end{proof}

\begin{theorem}[PRP--third-moment obstruction]\label{thm:prp}
Let \(\xi\) be a finite-support random vector in \(\R^d\) with positive
atom probabilities,
\[
 \E[\xi]=0,\qquad \E[\xi\xi^\top]=I_d.
\]
Assume exact scalar one-step predictable representation: every centered
scalar \(L^2\)-function of \(\xi\) is a linear function of \(\xi\).  If
\[
 \E[\xi^{\otimes3}]=0,
\]
then \(d=1\).  In particular, for \(d\ge2\), exact scalar PRP and vanishing
third moment are incompatible.
\end{theorem}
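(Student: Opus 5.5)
We argue by contradiction through Lemma~\ref{lem:quadratic-structure}. Assume exact scalar one-step PRP, centering, isotropy, and $\E[\xi^{\otimes3}]=0$. The lemma expresses each centered quadratic $Q_{ij}=\xi_i\xi_j-\delta_{ij}$ as a linear function of $\xi$, and its coefficients are the third moments $c^{ij}_k=\E[\xi_i\xi_j\xi_k]$. Under third-moment cancellation all these coefficients vanish, so
\[
 \xi_i\xi_j=\delta_{ij}\qquad\text{in }L^2,\ \text{hence almost surely,}
\]
for every $1\le i,j\le d$. Since the atoms have positive probability, this identity holds at every support point $v$ of $\xi$.

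The contradiction comes from the off-diagonal equation. Suppose $d\ge2$. Then $v_1v_2=0$ at every atom, while $v_1^2=v_2^2=1$ at every atom. But $v_1^2=v_2^2=1$ forces $|v_1v_2|=1$, contradicting $v_1v_2=0$. Equivalently, $\E[\xi_1^2\xi_2^2]=\E[1\cdot1]=1$, whereas $\xi_1\xi_2=0$ almost surely gives $\E[\xi_1^2\xi_2^2]=0$. Hence $d=1$.

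As a sanity check, $d=1$ is consistent: the Rademacher law satisfies $\xi^2=1$, has vanishing third moment, and has exact PRP. I would also note the alternative counting route. Proposition~\ref{prop:minprp} gives $m=d+1$ atoms. The relation $\xi\xi^\top=I_d$ at each atom is impossible in rank for $d\ge2$, because $vv^\top$ has rank one while $I_d$ has rank $d$. This rank observation is arguably the cleanest phrasing, and I would present it as the final step.

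There is no real obstacle. The only point needing care is the passage from an $L^2$ identity to a pointwise identity on the support, which uses the finite support and strictly positive atom probabilities assumed in the theorem.
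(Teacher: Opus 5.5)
Your proof is correct and is essentially the paper's argument: Lemma~\ref{lem:quadratic-structure} combined with vanishing third moments forces $\xi_i\xi_j=\delta_{ij}$ almost surely, and $\xi_1^2=\xi_2^2=1$ cannot hold together with $\xi_1\xi_2=0$. Your rank rephrasing is an equally valid finish, since $vv^\top=I_d$ fails at any single atom when $d\ge2$; it does not actually need the atom count $m=d+1$ from Proposition~\ref{prop:minprp}.
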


\begin{proof}
Assume for contradiction that \(d\ge2\) and that
\(\E[\xi^{\otimes3}]=0\).  By Lemma~\ref{lem:quadratic-structure}, all the
coefficient vectors \(c^{ij}\) vanish.  Hence, for every
\(1\le i,j\le d\),
\begin{equation}\label{eq:quadratic-collapse}
 \xi_i\xi_j=\delta_{ij}\qquad\text{almost surely}.
\end{equation}
Taking \(i=j=1\) and \(i=j=2\) gives
\[
 \xi_1^2=1,\qquad \xi_2^2=1\qquad\text{almost surely}.
\]
Thus \(|\xi_1\xi_2|=1\) almost surely.  On the other hand, taking
\(i=1\), \(j=2\) in \eqref{eq:quadratic-collapse} gives
\[
 \xi_1\xi_2=0\qquad\text{almost surely},
\]
a contradiction.  Therefore \(d\) cannot exceed one.

For \(d=1\), the centered isotropic Bernoulli law
\(\mathbb P(\xi=1)=\mathbb P(\xi=-1)=1/2\) has exact scalar one-step PRP and
vanishing third moment, so the exceptional one-dimensional case is genuine.
\end{proof}

Theorem~\ref{thm:universal-defect} gives a quantitative quadratic counterpart
to this obstruction.  Exact scalar PRP forces every centered quadratic
function into the linear sector and hence forces \(\Gamma_\mu(A)=0\) for
all \(A\).  Under third-moment cancellation in dimension \(d\ge2\),
Theorem~\ref{thm:universal-defect} instead gives a strictly positive universal
lower bound on the trace-free quadratic sector.  No converse implication
from vanishing quadratic defect to exact PRP is used here.

\begin{remark}[Equivalent directional formulation]\label{rem:directional-prp}
The same structure can be written in the directional form used later in the
weak-error calculation.  For \(a\in\R^d\),
\[
 Q_a:=(a^\top\xi)^2-|a|^2
\]
is centered and exact PRP gives \(Q_a=c(a)^\top\xi\).  Isotropy implies
\[
 c(a)^\top b
 =\E[(a^\top\xi)^2(b^\top\xi)]
 =\E[\xi^{\otimes3}][a,a,b].
\]
The directional identity is an algebraic consequence of the coordinate
identities in Lemma~\ref{lem:quadratic-structure}; equivalently,
\[
 Q_a=\sum_{i,j=1}^d a_i a_j Q_{ij}.
\]
Thus the third-order tensor is precisely the coefficient map of the
centered quadratic sector.  If the tensor vanished, the finite family
\(Q_{ij}\) would vanish simultaneously almost surely, and hence
\(Q_a=0\) for every \(a\).  Conversely,
if exact PRP holds in dimension \(d\ge2\), Theorem~\ref{thm:prp} implies
that the symmetric cubic form
\[
 a\longmapsto \mu_3(a):=\E[(a^\top\xi)^3]
\]
is not identically zero.  Hence some \(a\) satisfies \(\mu_3(a)\ne0\).
\end{remark}

\begin{corollary}[A universal first-order barrier under exact PRP]
\label{cor:barrier}
Let \(d\ge2\) and let \(\xi\) be a finite-support centered isotropic
increment with exact scalar one-step PRP.  Then there exist \(a\in\R^d\) and \(\theta\in\R\) such that, for the bounded
\(C^\infty\) terminal function \(g(x)=\sin(a^\top x+\theta)\), the
associated heat random-walk approximation satisfies
\[
 |U_0^h-u(0,0)|=\Theta(\sqrt h).
\]
Consequently, for the one-step random-walk propagation considered here,
no such exact-PRP increment law can satisfy a universal \(O(h)\) weak-error
bound over all bounded smooth Fourier terminal data.
\end{corollary}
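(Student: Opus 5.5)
By Remark~\ref{rem:directional-prp} (a consequence of Theorem~\ref{thm:prp}), the cubic form $\mu_3(a)=\E[(a^\top\xi)^3]$ is not identically zero, so we can fix $a$ with $\mu_3(a)\ne0$. Scaling $a$ changes nothing structural, so we keep $a$ as chosen. Take $g(x)=\sin(a^\top x+\theta)$ and $f\equiv0$. The Brownian solution is $u(0,x)=e^{-|a|^2T/2}\sin(a^\top x+\theta)$.

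Because the increment law is state-independent, $P_h$ acts on $e^{i a^\top x}$ by the scalar multiplier
\[
 m(h)=\E[e^{i\sqrt h\,a^\top\xi}].
\]
Hence $U_0(0)=\operatorname{Im}\bigl(e^{i\theta}m(h)^N\bigr)$. This is exactly the computation done for the minimal simplex.

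First, expand $\log m(h)$ using centering, isotropy and finite support, which gives bounded moments of every order:
\[
 \log m(h)=-\tfrac{|a|^2}{2}h-\tfrac{i}{6}\mu_3(a)h^{3/2}+O(h^2),
\]
uniformly for small $h$. The $h^{3/2}$ coefficient is the third cumulant, which equals the third moment because the mean is zero.

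Next, multiply by $N=T/h$:
\[
 m(h)^N=e^{-|a|^2T/2}\exp\!\bigl(-\tfrac{i}{6}\mu_3(a)T\sqrt h+O(h)\bigr)
 =e^{-|a|^2T/2}\bigl[1-\tfrac{i}{6}\mu_3(a)T\sqrt h+O(h)\bigr].
\]
Taking imaginary parts against $e^{i\theta}$ gives
\[
 U_0(0)-u(0,0)=-e^{-|a|^2T/2}\tfrac{\mu_3(a)T}{6}\cos\theta\,\sqrt h+O(h).
\]
Choosing $\theta=0$ makes the leading coefficient nonzero, so the error is $\Theta(\sqrt h)$: bounded above by $C\sqrt h$ and below by $c\sqrt h$ for all small $h$.

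The final claim follows immediately. A universal $O(h)$ bound over bounded smooth Fourier data would contradict this lower bound for the particular $g$ constructed.

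The main point needing care is the uniformity of the $O(h^2)$ remainder in $\log m(h)$ once it is multiplied by $N$. Finite support makes $m(h)$ analytic in $s=\sqrt h$ near $0$ with $m(0)=1$. So $\log m$ is analytic there, and its Taylor coefficients up to $s^3$ are fixed by the first three moments. The $s^4$ remainder is therefore $O(h^2)$, and the sum over $N$ steps contributes $O(h)$.
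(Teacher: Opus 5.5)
Your proposal is correct and follows the paper's proof essentially step for step. You take $a$ with $\mu_3(a)\ne0$ from Theorem~\ref{thm:prp} via Remark~\ref{rem:directional-prp}, expand the analytic logarithm of $m(h)=\E[e^{i\sqrt h\,a^\top\xi}]$ through order $h^{3/2}$, raise to the power $T/h$, and take imaginary parts with $\theta=0$; your observation that finite support makes $m$ analytic in $s=\sqrt h$ is just a slightly more explicit justification of the same $O(h^2)$ remainder the paper uses.
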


\begin{proof}
By Theorem~\ref{thm:prp} and Remark~\ref{rem:directional-prp}, the cubic
form is not identically zero, so there exists \(a\) with
\[
 \mu_3(a)=\E[(a^\top\xi)^3]\ne0.
\]
Throughout \(h=T/N\) with \(N\in\mathbb N\).  Write
\[
 m(h):=\E[e^{i\sqrt h\,a^\top\xi}]
 =1-\frac{|a|^2}{2}h-\frac{i\mu_3(a)}6h^{3/2}+O(h^2).
\]
For all sufficiently small \(h\), \(m(h)\) lies in a fixed neighborhood of
\(1\), so we use the analytic logarithm there.  Then
\[
 \log m(h)
 =-\frac{|a|^2}{2}h-\frac{i\mu_3(a)}6h^{3/2}+O(h^2),
\]
and hence
\[
 m(h)^{T/h}
 =e^{-|a|^2T/2}
 \left(1-\frac{i\mu_3(a)T}{6}\sqrt h+O(h)\right).
\]
For \(g(x)=\sin(a^\top x+\theta)\),
\[
 U_0^h=\Im\!\left(e^{i\theta}m(h)^{T/h}\right),\qquad
 u(0,0)=e^{-|a|^2T/2}\sin\theta.
\]
Therefore
\[
 U_0^h-u(0,0)
 =-e^{-|a|^2T/2}\frac{\mu_3(a)T}{6}\cos\theta\,\sqrt h+O(h).
\]
Choosing, for instance, \(\theta=0\) makes the leading coefficient nonzero.
\end{proof}

\begin{remark}[Scope and time-inhomogeneous laws]
\label{rem:scope-obstruction}
The global Fourier barrier uses the same law at every step.  This restriction is
essential: if \(\eta_n\) are i.i.d.\ minimal-simplex increments and
\(\xi_n=(-1)^n\eta_n\), each step has exact scalar PRP, while successive
Fourier multipliers are conjugate.  For an even number of steps the cubic
phase defects cancel and the heat value error is \(O(h)\).

The one-step obstruction remains.  For smooth \(\phi\),
\[
 D_h\phi-\nabla\phi
 =\frac{\sqrt h}{2}\E[\xi D^2\phi[\xi,\xi]]+O(h).
\]
Thus every exact-PRP law in \(d\ge2\) has a Hessian direction with a local
\(\sqrt h\) predictable-field defect; for the alternating example only its
sign changes.  We make no corresponding global \(Z\)-barrier claim for an
arbitrary time-dependent sequence, since one fixed Hessian need not detect
all of its third-order tensors.  The theorem is therefore a fixed-law
accuracy--representation obstruction, not an impossibility result for
general complete BSDE algorithms.
\end{remark}

\section{Numerical validation}\label{sec:numerics}

The purpose of this section is deliberately limited.  The experiments do
not establish the convergence theorem; rather, they illustrate the sharp
asymptotic distinction predicted by the analysis and check that it persists
for a genuinely semilinear driver.  No Monte Carlo error is present.

\subsection{Exact Fourier benchmark}

Take \(T=1\), \(\theta=0.37\), \(a=\widehat v_0\), and
\[
 g(x)=\sin(a^\top x+\theta),\qquad f=0.
\]
The exact value is \(u(0,0)=e^{-d/2}\sin\theta\).  For the minimal and
symmetric simplex walks the corresponding characteristic multipliers are
\[
 m_{\min}(h)=\frac{e^{id\sqrt h}+d e^{-i\sqrt h}}{d+1},
 \qquad
 m_{\rm sym}(h)=\frac{\cos(d\sqrt h)+d\cos(\sqrt h)}{d+1}.
\]
Hence the numerical values at time zero are evaluated directly from
\(m(h)^{T/h}\), without simulation.  We use
\(N=128,256,512,1024,2048,4096\); Table~\ref{tab:fourier-order} reports
least-squares slopes over the five finest meshes.

\begin{table}[ht]
\centering
\caption{Observed weak orders in the exact Fourier benchmark.}
\label{tab:fourier-order}
\begin{tabular}{c|cc}
\(d\) & minimal simplex & symmetric simplex\\ \hline
2 & 0.506 & 1.000\\
3 & 0.506 & 1.000\\
5 & 0.509 & 1.003
\end{tabular}
\end{table}

The experiment reproduces the \(\sqrt h\) slope of the minimal-simplex
Fourier barrier in Section~7.1 and Corollary~\ref{cor:barrier}, together
with the \(h\) slope predicted by Theorem~\ref{thm:main} for the
third-moment-free symmetric rule.  The order separation is already visible at moderate mesh sizes.

\subsection{A semilinear manufactured solution}

We next test nonlinear \(y\)-dependence and nontrivial \(z\)-dependence while
retaining an exact reference solution.  Let
\[
 u(t,x)=e^{-\kappa(T-t)}\sin(a^\top x+\theta),\qquad
 f(t,x,y,z)=\lambda\tanh(y)+\beta^\top z+q(t,x),
\]
where \(\lambda=0.6\), \(\kappa=0.35\), and \(\beta\) is parallel to \(a\)
with \(\beta^\top a=0.12\).  We define
\[
 q(t,x)=
 -\partial_tu(t,x)-\frac12\Delta u(t,x)
 -\lambda\tanh(u(t,x))-\beta^\top\nabla u(t,x),
\]
so that \(u\) solves the semilinear PDE exactly.

The solution depends only on \(r=a^\top x\), so this is an exact
directional stress test rather than a genuinely multidirectional numerical
example; the multidimensional content of Theorem~\ref{thm:main} is
analytic.  We implement the periodic one-dimensional recursion spectrally:
translations are applied through their Fourier multipliers, while the scalar
implicit \(y\)-equation is solved by Newton iteration to machine precision.
There is no Monte Carlo error and no off-grid interpolation.  We use a
periodic spectral grid with \(M=1024\) points; doubling the resolution to
\(M=2048\) leaves all reported slopes unchanged to three decimal places.
We use \(N=16,32,64,128,256,512\).  The reported value error is
\(|U_0(0)-u(0,0)|\), and the reported field error is
\[
 \left|D_hU_1(0)-\nabla u(t_1,0)\right|,
\]
with the Euclidean norm for the vector quantity.  Thus Table~\ref{tab:nonlinear-order}
is a pointwise numerical check, not a numerical estimate of the sup-norm
bounds in Theorem~\ref{thm:main}.  The table reports least-squares slopes
over the four finest meshes.

\begin{table}[ht]
\centering
\caption{Observed orders for the semilinear manufactured test.}
\label{tab:nonlinear-order}
\begin{tabular}{c|cc|cc}
& \multicolumn{2}{c|}{value \(Y\)} & \multicolumn{2}{c}{field \(Z\)}\\
\(d\)& minimal & symmetric & minimal & symmetric\\ \hline
2&0.509&1.005&0.599&1.001\\
3&0.504&1.004&0.565&1.003\\
5&0.508&1.007&0.549&1.007
\end{tabular}
\end{table}

The symmetric rule is first order for both quantities to numerical
precision.  The minimal rule has the predicted half-order value error.
For \(Z\), the finite-mesh minimal-rule slopes are above one half and decrease
as the mesh is refined.  The one-step expansion in
Remark~\ref{rem:scope-obstruction} shows why a \(\sqrt h\) local
predictable-field defect is generically present for an exact-PRP law, but
Table~\ref{tab:nonlinear-order} is not used to claim a separate global
half-order theorem for \(Z\).

\section{Structural consequences and open questions}

The comparison distinguishes three properties whose compatibility depends
on the finite-state increment law:
covariance matching, cancellation of the cubic weak defect, and predictable
representation.  Exact scalar one-step PRP fixes the dimension of the
centered \(L^2\) space at \(d\).  In dimension \(d\ge2\), that minimal
geometry necessarily leaves a nonzero third-order structure tensor.  By
contrast, in dimension $d\ge2$, third-moment-free rules require additional
one-step degrees of freedom, and these degrees of freedom become the GKW-orthogonal sector of
the BS\(\Delta\)E.  For the two simplex rules compared explicitly in this
paper, the order-\(h^{1/2}\) effect therefore appears in different places:
as a weak bias for the minimal exact-PRP rule, and as the scale of the
orthogonal martingale for the symmetric first-order rule.  This observation
is specific to the comparison established here and is not asserted as a
universal conservation principle for finite-state schemes.

From a financial perspective, the obstruction may also be read as a design
principle for finite-state approximations of multidimensional value and
first-order risk quantities.  In a Markovian BSDE, the value field
represents the continuation value, while its spatial gradient and
predictable \(Z\)-component carry first-order sensitivity and
martingale-exposure information.  Within the finite-state class studied
here, exact one-step martingale representation and the third-moment
cancellation underlying first-order smooth approximation cannot in general
be imposed simultaneously.  Thus the increment law should be chosen
according to whether the numerical design prioritizes exact one-step
risk representation or higher accuracy for smooth value and sensitivity
quantities.

The symmetric simplex is only one realization of the first-order class.
For example, the \(2d\)-point axis rule in
Remark~\ref{rem:axisrule} has the same vanishing-third-moment property.  We
have used the symmetric simplex for the sharp analysis because its
antipodal relationship with the minimal regular simplex makes the
representation tradeoff and the Fourier calculation explicit.  With
respect to the operator defect in \eqref{eq:worst-defect-lb},
Corollary~\ref{cor:defect-extremizers} and
Remark~\ref{rem:total-directional-defect} show that the symmetric simplex
is optimal in $d=2$ and is not optimal for $d\ge3$.  We make no optimality
claim for it with respect to support size, higher moments, or computational
complexity.  A related open question is the tradeoff between support size
and quadratic defect, including the minimal support size of a weighted
spherical $4$-design attaining \eqref{eq:worst-defect-lb}.

The one-dimensional case is exceptional.  There the centrally symmetric
simplex reduces to the ordinary two-point Bernoulli law, so exact
predictable representation and odd-moment cancellation coexist.  The
obstruction is therefore genuinely multidimensional.

The functional limit in Section~\ref{sec:fclt} shows that the quadratic
GKW sector is precisely the source of the leading orthogonal fluctuation:
after normalization by $h^{-1/2}$, the residual converges jointly with the
state process to an independent-innovation stochastic integral whose
random variance is $\int\Gamma_\mu(D^2u)\,dt$.  Natural extensions include
state-dependent increment laws, nonsmooth terminal data, and higher-order
finite-state schemes in which fourth and higher Gaussian moments are also
matched.

\appendix

\section{Residual estimates}

We record the consistency estimates used in the proof of
Theorem~\ref{thm:main}.  This also makes explicit why the regularity package
stops at fifth spatial derivatives.

\begin{lemma}[Value and differentiated residuals]\label{lem:residuals}
Under Assumptions~\ref{ass:increment} and \ref{ass:Hnum}, let
\(u_n=u(t_n,\cdot)\), \(p_n=\nabla u(t_n,\cdot)\), and define
\[
 z_n^\ast=D_hu_{n+1},\qquad
 \Theta_n^\ast=(t_{n+1},x,u_n,z_n^\ast).
\]
Then
\[
 \tau_n^u
 =u_n-P_hu_{n+1}-h f(\Theta_n^\ast)
\]
satisfies
\[
 \|\tau_n^u\|_\infty\le Ch^2.
\]
Moreover, with
\[
 \tau_n^p
 =
 p_n-P_hp_{n+1}
 -h\left\{
 f_x(\Theta_n^\ast)
 +f_y(\Theta_n^\ast)p_n
 +(J_hp_{n+1})f_z(\Theta_n^\ast)
 \right\},
\]
where \(J_hp_{n+1}=\nabla(D_hu_{n+1})\), one has
\[
 \|\tau_n^p\|_\infty\le Ch^2.
\]
\end{lemma}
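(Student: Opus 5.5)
The plan is to prove both estimates by Taylor expansion in time and space around \((t_n,x)\) and \((t_{n+1},x)\). The PDE is used at \((t_n,x)\) to cancel the order-\(h\) terms. Proposition~\ref{prop:local} supplies the third-moment cancellation.

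\textbf{Value residual.} Write
\[
P_hu_{n+1}-u_{n+1}=hL_hu_{n+1}=\tfrac h2\Delta u_{n+1}+O(h^2),
\]
by Proposition~\ref{prop:local}, which needs bounded \(D^4u\). Expand in time, using bounded \(u_{tt}\) and \(\nabla u_t\):
\[
u_{n+1}=u_n+hu_t(t_n)+O(h^2),\qquad
\Delta u_{n+1}=\Delta u_n+O(h).
\]
The second relation uses bounded \(D^2u_t\), and it is multiplied by \(h\). Combining,
\[
\tau_n^u=-h\bigl[u_t+\tfrac12\Delta u\bigr]_n-hf(\Theta_n^\ast)+O(h^2).
\]
Then replace \(f(\Theta_n^\ast)\) by \(f(t_n,x,u_n,\nabla u_n)\). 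The cost is \(O(h)\): Lipschitz continuity in \(t\) handles the time shift, and Lipschitz continuity in \(z\) handles the \(z\)-shift, since \(z_n^\ast-\nabla u_n=O(h)\) by Proposition~\ref{prop:local} and bounded \(\nabla u_t\). After multiplication by \(h\) this is \(O(h^2)\), and the PDE kills the bracket.

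\textbf{Gradient residual.} Differentiate the PDE in \(x\). With \(p=\nabla u\), this gives the linear system
\[
p_t+\tfrac12\Delta p+f_x+f_y\,p+(D^2u)f_z=0,
\]
with the derivatives of \(f\) evaluated at \((t,x,u,\nabla u)\). Apply the same argument componentwise to \(p\):
\[
P_hp_{n+1}-p_{n+1}=\tfrac h2\Delta p_{n+1}+O(h^2)
\]
needs bounded \(D^4p\), i.e.\ bounded \(D^5u\). The time expansions use bounded \(\nabla u_{tt}\) and \(D^3u_t\). It remains to compare the coefficient terms with their PDE counterparts at \((t_n,x,u_n,\nabla u_n)\) to \(O(h)\) accuracy.
\begin{itemize}
\item \(J_hp_{n+1}\) versus \(D^2u_n\): since \(J_hp_{n+1}=\nabla(D_hu_{n+1})\), apply Proposition~\ref{prop:local} to \(\partial_ju\). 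This gives \(J_hp_{n+1}-D^2u_{n+1}=O(h)\) with bounded \(D^4u\), and bounded \(D^2u_t\) then gives \(D^2u_{n+1}-D^2u_n=O(h)\).
\item \(f_\bullet(\Theta_n^\ast)\) versus \(f_\bullet(t_n,x,u_n,\nabla u_n)\): joint Lipschitz continuity of \(f_x,f_y,f_z\) in \((t,z)\) makes the difference \(O(h)\).
\end{itemize}
Each product is then \(O(h)\), because \(p_n\), \(D^2u\) and \(f_z\) are bounded. Multiplied by \(h\), all these errors are \(O(h^2)\).

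\textbf{Main obstacle.} The delicate step is justifying \(J_hp_{n+1}=\nabla(D_hu_{n+1})\) and the componentwise application of Proposition~\ref{prop:local} to \(\nabla u\) with sup-norm bounds. Both require \(C^4_b\) regularity of \(p\), hence the \(D^5u\) assumption. We must also make sure that only the listed derivatives appear in the time-Taylor remainders, so the mixed terms should be expanded in time first and only then in space.
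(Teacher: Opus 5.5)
Your proposal is correct and follows the paper's proof essentially step for step. For $\tau_n^u$ you use the heat-step expansion via Proposition~\ref{prop:local}, then time Taylor expansion, then an $O(h)$ replacement of $f(\Theta_n^\ast)$ by its value at $(t_n,x,u_n,\nabla u_n)$. For $\tau_n^p$ you apply the same argument componentwise to $p=\nabla u$ (using $D^5u$, $\nabla u_{tt}$, $D^3u_t$), compare $J_hp_{n+1}$ with $D^2u_{n+1}$ and then with $D^2u_n$, and close with the spatially differentiated PDE.

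The step you flag as the main obstacle is routine, since it is differentiation under a finite expectation. Your closing advice to expand in time before space is not needed. The order you actually use (space at $t_{n+1}$, then time) is the paper's order and requires only the listed derivatives.
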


\begin{proof}
We first isolate the linear heat part.  Proposition~\ref{prop:local} gives
\[
 P_hu_{n+1}
 =
 u_{n+1}+\frac h2\Delta u_{n+1}+R_n^P,
 \qquad
 \|R_n^P\|_\infty
 \le \frac{\kappa_4}{24}h^2\|D^4u\|_\infty .
\]
Taylor's formula in time yields
\[
 u_{n+1}
 =
 u_n+h(u_t)_n+R_n^t,\qquad
 \|R_n^t\|_\infty\le \frac{h^2}{2}\|u_{tt}\|_\infty ,
\]
and
\[
 \frac h2(\Delta u_{n+1}-\Delta u_n)
 =
 \frac h2\int_{t_n}^{t_{n+1}}\Delta u_t(s,\cdot)\,ds
 =O(h^2)
\]
uniformly, by the bound on \(D^2u_t\).  Hence
\begin{equation}\label{eq:app-linear-value}
 P_hu_{n+1}
 =
 u_n+h\left(u_t+\frac12\Delta u\right)_n+O_\infty(h^2).
\end{equation}
Furthermore, Proposition~\ref{prop:local} and the time regularity of the
gradient give
\begin{equation}\label{eq:app-zshift}
 \|D_hu_{n+1}-p_n\|_\infty
 \le
 \|D_hu_{n+1}-p_{n+1}\|_\infty
 +\|p_{n+1}-p_n\|_\infty
 \le Ch.
\end{equation}
Since \(f\) is Lipschitz in \(t\) and \(z\),
\[
 f(\Theta_n^\ast)
 =
 f(t_n,x,u_n,p_n)+O_\infty(h).
\]
Combining this identity with \eqref{eq:app-linear-value} and the PDE proves
\(\|\tau_n^u\|_\infty\le Ch^2\).

For the differentiated residual, apply the same heat-step calculation to
the vector field \(p=\nabla u\).  Componentwise use of
Proposition~\ref{prop:local}, followed by Taylor expansion in time, gives
\begin{equation}\label{eq:app-linear-grad}
 P_hp_{n+1}
 =
 p_n+h\left(p_t+\frac12\Delta p\right)_n+O_\infty(h^2).
\end{equation}
Here the spatial remainder uses \(D^4p=D^5u\), while the time remainder
uses \(p_{tt}=\nabla u_{tt}\) and
\(\Delta p_t=D^3u_t\).

It remains to compare the differentiated driver at \(t_{n+1}\) with the
spatial derivative of the PDE driver at \(t_n\).  By applying the
\(D_h\)-estimate in Proposition~\ref{prop:local} componentwise to
\(p_{n+1}\),
\[
 \|J_hp_{n+1}-D^2u_{n+1}\|_\infty\le Ch,
\]
where the constant uses \(D^4u\).  The time shift then gives
\[
 \|J_hp_{n+1}-D^2u_n\|_\infty\le Ch
\]
by the bound on \(D^2u_t\).  Together with
\eqref{eq:app-zshift} and the joint Lipschitz continuity of
\(f_x,f_y,f_z\), this yields
\begin{align*}
 &f_x(\Theta_n^\ast)
 +f_y(\Theta_n^\ast)p_n
 +(J_hp_{n+1})f_z(\Theta_n^\ast)\\
 &\qquad =
 f_x(t_n,x,u_n,p_n)
 +f_y(t_n,x,u_n,p_n)p_n
 +(D^2u_n)f_z(t_n,x,u_n,p_n)
 +O_\infty(h).
\end{align*}
The first three terms on the right are precisely
\[
 \nabla_x\bigl[f(t_n,x,u_n,\nabla u_n)\bigr].
\]
Differentiating the PDE in space therefore gives
\[
 p_t+\frac12\Delta p
 +f_x+f_yp+(D^2u)f_z=0.
\]
Combining this identity with \eqref{eq:app-linear-grad} proves
\(\|\tau_n^p\|_\infty\le Ch^2\).
\end{proof}

\begin{remark}[Derivative count]
The proof uses \(D^4u\) for the value heat-step remainder and \(D^5u\) after
one spatial differentiation.  Time shifts require \(u_{tt}\),
\(\nabla u_{tt}\), \(D^2u_t\), \(D^3u_t\), and \(\nabla u_t\).
No sixth spatial derivative of \(u\) and no second time derivative of the
driver are used.
\end{remark}

\section{One-step martingale geometry}

For \(d\ge2\), label the symmetric support by \((i,\sigma)\),
\(0\le i\le d\), \(\sigma\in\{\pm1\}\).  The centered scalar function
space has dimension \(2d+1\).  The linear sector has dimension \(d\), so
the GKW complement has dimension \(d+1\).

There is a useful parity decomposition.  The even centered sector has
dimension \(d\).  In addition there is one odd direction orthogonal to all
linear coordinate functions.  Quadratic Taylor terms enter the even sector
at order \(h\), whereas the additional odd direction can first be reached
by cubic terms at order \(h^{3/2}\).  This explains why the leading
orthogonal martingale is generated by the Hessian.

For \(d=1\), this labelled count is artificial: the symmetric simplex is
just the two-point Bernoulli law, and exact PRP is recovered.

\section*{Declaration of generative AI and AI-assisted technologies in the manuscript preparation process}

During the preparation of this work, the author used OpenAI ChatGPT to
assist with literature searches, manuscript organization, language editing,
and the drafting and testing of numerical code.  All mathematical formulas,
proofs, derivations, and numerical results presented in this manuscript were
independently verified by the author, who takes full responsibility for their
accuracy and for the content of the publication.

\section*{Disclaimer}
The views expressed in this article are solely those of the author and do not necessarily reflect those of Sumitomo Mitsui Banking Corporation.

\end{document}